\documentclass[11pt,a4paper,notitlepage,oneside]{amsart}
 \pdfoutput=1 %The Magic Line for arXiv
\usepackage{etoolbox}
\usepackage{microtype}% espansione font + protusione caratteri per miglior riempimento riga
\usepackage{amsmath}
\usepackage{amsthm}
\usepackage{braket}
\usepackage{pdfpages}
\usepackage[psamsfonts]{amssymb}
\usepackage[T1]{fontenc}
\usepackage[english]{babel}
\usepackage[utf8]{inputenc}
\usepackage{enumitem}
\usepackage{mathtools}
\usepackage{caption,subfig}% per figure e didascalie
\usepackage{rotating}% per ruotare figure
\usepackage[makeroom]{cancel}
\usepackage[normalem]{ulem}
\usepackage{color,soul}
\usepackage{scalerel}
\usepackage{relsize}

\usepackage[autostyle,italian=guillemets]{csquotes}
\usepackage[style=alphabetic,firstinits=true,maxbibnames=99,backend=bibtex]{biblatex}
\bibliography{bibliography}
\allowdisplaybreaks %Long formulas can break in 2 different pages.

%%%for GeoGebra
\usepackage{pgf,tikz,pgfplots}

\usepackage{mathrsfs}
\usetikzlibrary{arrows}
%%%

\usepackage{fancyhdr}
\usepackage{tikz-cd}
\usepackage{amscd}
\usepackage[a4paper, bottom=3cm, left=3cm,right=3cm]{geometry}
\usepackage{hyperref}
\hypersetup{hidelinks}% nasconde i quadrati colorati delle referenze.

%%%for separated sections.
\usepackage{subfiles}
%%%
\pagestyle{plain}
\usepackage{my_shortcuts}

\author{Riccardo Zuffetti}
\title{Unfolding and injectivity of the Kudla--Millson lift of genus~1}
\address{
Fachbereich Mathematik, Technische Universit\"at Darmstadt, Schlossgartenstrasse 7,
D–64289 Darmstadt, Germany\\
}
\email{zuffetti@mathematik.tu-darmstadt.de, riccardo.zuffetti@gmail.com}

\setcounter{tocdepth}{1} %Only main sections in the index of contents

%%% For correction from review
\newcommand{\myblack}{black}

\begin{document}
\maketitle
	%%%%%%%%%%%%%%%%%%%%%%%%%%
	\begin{abstract}
	We unfold the theta integrals defining the Kudla--Millson lift of genus~$1$ associated to even lattices of signature~$(b,2)$, where~$b>2$.
	This enables us to compute the Fourier expansion of such defining integrals and prove the injectivity of the Kudla--Millson lift.
	Although the latter result has been already proved in~\cite{br;converse}, our new procedure has the advantage of paving the ground for a strategy to prove the injectivity of the lift also for the cases of general signature and of genus greater than~$1$.
	\end{abstract}
	%%%%%%%%%%%%%%%%%%%%%%%%%%%%%%%%%%%%%
	\tableofcontents
	
	%%%%%%%%%%%%%%%%%%%
	\section{Introduction}

	We consider the Kudla--Millson lift as a linear map from a space of elliptic cusp forms to the space of closed~$2$-forms on some orthogonal Shimura varieties.
	Starting from the foundational work of Kudla an Millson~\cite{kumi;harmI} \cite{kumi;harmII} \cite{kumi;intnum}, such a lift has attracted much interest.
	In fact, it is strictly related with Borcherds' singular theta lift~\cite{brfutwo}, and its injectivity provides a way to study the geometry of orthogonal Shimura varieties by means of modular forms~\cite{br;borchp}~\cite{br;converse}~\cite{brmo}~\cite{zuf;conesspeccy}.
	
	In this article we apply Borcherds' formalism~\cite{bo;grass} to unfold the theta integrals that arise from the very definition of the Kudla--Millson lift.
	As an application, we compute the Fourier expansion of such integrals and prove that the Kudla--Millson lift is injective in many cases.
	The latter result is the same as~\cite[Theorem~$5.3$]{br;converse}, but it is here proved in a different way.
	
	The new procedure illustrated in this paper has the advantage of paving the ground for a strategy to prove the injectivity of the Kudla--Millson lift in the case of general signature, generalizing~\cite{brfu}, as well as the cases of genus higher than~$1$.
	\textcolor{\myblack}{
	This has been recently implemented by the author and Metzler~\cite{metzlerzuffetti}, and the author and Kiefer~\cite{kieferzuffetti} respectively.
	}
%	; see Section~\ref{sec;furthgen} for further information.
%	Results on those different cases are currently part of further research, and will be published separately.
	
	\textcolor{\myblack}{
	The theta kernel of the Kudla--Millson lift may be constructed as a theta form associated to the reductive dual pair~$(\SL_2(\RR),\bigO(n,2))$.
	The theta correspondence in this setting has been of great interest in several other articles, for instance in the works of Oda~\cite{oda} and Rallis--Schiffmann~\cite{rallis-schiffmann},~\cite{rallis}, where theta lifts from elliptic modular forms (with level) to orthogonal modular forms were studied.
	}
	
	We now explain the results of this article in more details.	
	Let~$\big(L,(\cdot{,}\cdot)\big)$ be a non-degenerate even lattice of signature~$(b,2)$, where~$b>2$.
	We denote by~$q(\lambda)=(\lambda,\lambda)/2$ the quadratic form of~$L$, and write~$\lambda^2$ in place of~$(\lambda,\lambda)$, for every~$\lambda\in L$.
	To simplify the illustration, we assume~$L$ to be \emph{unimodular}.
	Let~$k=1+b/2$.
	This is an even integer, as one can easily deduce from the well-known classification of unimodular lattices.
	
	Let~$V=L\otimes\RR$.
	The Hermitian symmetric domain~$\hermdom$ associated to the linear algebraic group~${G=\GG(V)}$ may be realized as the Grassmannian~$\Gr(L)$ of negative definite planes in~$V$.
%	We denote by~$\bigO^+(V)$ the connected component of the identity of~$\bigO(V)$.
	Let~${X_\Gamma=\Gamma\backslash\hermdom}$ be the orthogonal Shimura variety arising from a subgroup~$\Gamma$ of finite index in~$\GG(L)$.
%	~${\bigO^+(L)\coloneqq \bigO^+(V)\cap\bigO(L)}$.
	
	Kudla and Millson constructed a $G$-invariant Schwartz function $\varphi_{\text{KM}}$ on $V$ with values in the space $\mathcal{Z}^2(\hermdom)$ of closed differential $2$-forms on $\hermdom$.
	An explicit formula for~$\varphi_{\text{KM}}$ is provided in Section~\ref{sec;compDp2}.
	We denote by~$\omega_\infty$ the Schrödinger model of the Weil representation of~$\SL_2(\RR)$ acting on the space~$\mathcal{S}(V)$ of Schwartz functions on~$V$; see Definition~\ref{defi;schrmod} for details.
	\begin{defi}\label{def;KMthetaformgenus1}
	The Kudla--Millson theta form associated to~$L$ is defined as
	\bes
	\Theta(\tau,z,\varphi_{\text{KM}})=y^{-k/2}\sum_{\lambda\in L}\big(\omega_\infty(g_\tau)\varphi_{\text{KM}}\big)(\lambda,z),
	\ees
	for every $\tau=x+iy\in\HH$ and $z\in\Gr(L)$, where $g_\tau=\left(\begin{smallmatrix}
	1 & x\\ 0 & 1
	\end{smallmatrix}\right)\big(\begin{smallmatrix}
	\sqrt{y} & 0\\ 0 & \sqrt{y}^{-1}
	\end{smallmatrix}\big)$ is the standard element of~$\SL_2(\RR)$ mapping~${i\in\HH}$ to~$\tau$.
	\end{defi}
	In the variable~$\tau$, the Kudla--Millson theta form transforms like a (non-holomorphic) modular form of weight~$k$ with respect to~$\SL_2(\ZZ)$.	
	In the variable~$z$, it defines a closed~$2$-form on~$\hermdom$ and~$X_\Gamma$.
	
	Let~$S^k_1$ be the space of weight~$k$ elliptic cusp forms with respect to the full modular group~$\SL_2(\ZZ)$.
	\begin{defi}\label{def;KMliftgen1}
	The \emph{Kudla--Millson lift} of genus $1$ associated to~$L$ is the map
	\be\label{eq;liftingLambda}
	\KMliftbase\colon S^k_1\longrightarrow\mathcal{Z}^2(X_\Gamma), \qquad f\longmapsto\KMliftbase(f)=\int_{\SL_2(\ZZ)\backslash\HH}y^k f(\tau)\overline{\Theta(\tau,z,\varphi_{\text{\rm KM}})}\,\frac{dx\, dy}{y^2},
	\ee
	where $\frac{dx\, dy}{y^2}$ is the standard~$\SL_2(\ZZ)$-invariant volume element of~$\HH$.
	\end{defi}
	
	In Section~\ref{sec;KMthetagen1} we rewrite~$\Theta(\tau,z,\varphi_{\text{KM}})$ in terms of Siegel theta functions~$\Theta_L$ attached to certain homogeneous polynomials~$\polab$ of degree~$(2,0)$ defined on the standard quadratic space~$\RR^{b,2}$; see~\eqref{eq;comppolPab} for the definition of such polynomials.
	The Siegel theta functions~$\Theta_L$ were introduced by Borcherds in~\cite{bo;grass}.
	
	As explained in Section~\ref{sec;theunfofKMgen1}, is it possible to rewrite the lift~$\KMliftbase(f)$ as
	\be\label{eq;introKMl1nonunfold}
	\KMliftbase(f)=\sum_{\alpha,\beta=1}^b \Big(\underbrace{\int_{\SL_2(\ZZ)\backslash\HH} y^{k+1} f(\tau) \overline{\Theta_L(\tau,g,\polab)}\,\frac{dx\,dy}{y^2}}_{\eqqcolon\intfunct(g)}\Big)
	\cdot
	g^*\big(\omega_{\alpha,b+1}\wedge\omega_{\beta,b+2}\big),
	\ee
	where~$g\in G$ is any isometry mapping~$z$ to a fixed base point~$z_0$ of~$\mathcal{D}$, and~${g^*\big(\omega_{\alpha,b+1}\wedge\omega_{\beta,b+2}\big)}$ is a cotangent vector of~$\bigwedge^2 T_z^* \mathcal{D}$; see Definition~\ref{defi;KMfunct} for details on the construction of the latter.
	We refer to the integral functions~$\intfunct\colon G\to\CC$ appearing in~\eqref{eq;introKMl1nonunfold} as the \emph{defining integrals} of the Kudla--Millson lift of~$f$.
	
	The idea of this paper is to apply the formalism of Borcherds~\cite{bo;grass} to unfold the defining integrals~$\intfunct$, rewriting them over the simpler unfolded domain~$\Gamma_\infty\backslash\HH$, where~$\Gamma_\infty$ is the subgroup of translations in~$\SL_2(\ZZ)$.
	\textcolor{\myblack}{Such an unfolding, which depends on the choice of}
%	More precisely, we will choose
	a splitting~$L=\brK\oplus U$ for some Lorentzian sublattice~$\brK$ and some hyperbolic plane~$U$, \textcolor{\myblack}
	{
	is carried over in Section~\ref{sec;theunfofKMgen1}; see Corollary~\ref{cor;unfolded!} for a precise statement.
	}
%	and unfold~$\intfunct$ as follows.
%	We do not recall here the definitions of~$\borw$ and~$\polw{(\alpha,\beta),\borw}{h^+}{0}$, which come from~\cite{bo;grass}, and instead refer to Section~\ref{sec;Siegtheta}.
%	\begin{thm}\label{eq;thmunfintrogen1}
%	Let~$\genU,\genUU$ be the standard generators of the hyperbolic plane~$U$.
%	For every~${g\in G}$, we denote by~$z\in\Gr(L)$ the plane mapping to the base point~$z_0$ via~$g$.
%	The defining integrals~$\intfunct$ of the Kudla--Millson lift~$\KMliftbase(f)$ may be unfolded as
%	\ba\label{eq;introunfformgen1}
%	\intfunct(g)=\int_{\SL_2(\ZZ)\backslash\HH}\frac{y^{k+1/2}f(\tau)}{\sqrt{2}\, |\genU_{z^\perp}|}
%	\,
%	\overline{\Theta_{\brK}(\tau,\borw,\polw{(\alpha,\beta),\borw}{0}{0})}\,\frac{dx\,dy}{y^2} +\hspace{2.3cm}\\
%	+\nonumber
%	\frac{\sqrt{2}}{|\genU_{z^\perp}|}\int_{\Gamma_\infty\backslash\HH} y^{k+1/2}f(\tau)\sum_{r\ge1} \sum_{h^+=0}^2
%	\Big(\frac{r}{2iy}\Big)^{h^+}
%	\exp\Big(
%	-\frac{\pi r^2}{2y\genU_{z^\perp}^2}
%	\Big)\times\\
%	\times\overline{\Theta_{\brK}\big(\tau,r\mu,0,\borw,\polw{(\alpha,\beta),\borw}{h^+}{0}\big)}\,\frac{dx\,dy}{y^2},\nonumber
%	\ea
%	where~$\mu=-\genUU+\genU_{z^\perp}/2\genU_{z^\perp}^2+\genU_z/2\genU_z^2$.
%	\end{thm}
		
	If a complex valued function defined over~$G$ is invariant with respect to (Eichler transformations arising from) some Lorentzian sublattice of~$L$, then it admits a Fourier expansion.
	Although this general principle is classical in the literature, for the sake of completeness we provide an overview of it in Section~\ref{sec;FexpLlorinvfuncts}.
%	This is based on an explicit Iwasawa decomposition of~$G$.
	
	\textcolor{\myblack}{In Section~\ref{sec;compFexpafterunfold} we use the unfolding of~$\intfunct$ to compute the Fourier expansion of the defining integrals~$\intfunct$}, obtaining the following result.
	We denote by~$c_n(f)$ the~$n$-th Fourier coefficient of the cusp form~$f$.
	\textcolor{\myblack}{%If~$\genvec\in V=L\otimes\RR$ and~$s$ is a subspace of~$V$, we denote by~$v_s$ th orthogonal projection of~$v$ on~$s$.
	Let~$\genU,\genUU$ be the standard generators of the hyperbolic plane~$U$.
	For every~${g\in G}$, we denote by~$z\in\Gr(L)$ the plane mapping to the base point~$z_0$ under~$g$, and denote by~$v_z$ the orthogonal projection of any~$v\in L\otimes\RR$ to~$z$.
	We also denote by~$w^\perp$ the orthogonal complement of~$u_{z^\perp}$ in~$z^\perp$.
	The linear map~$\borw \colon L\otimes\RR\to L\otimes\RR$, $\borw(\genvec)=g(\genvec_{w^\perp}+\genvec_w)$, induces a split of the polynomial~$\polab$ in terms of polynomials~$\polw{(\alpha,\beta),\borw}{h^+}{0}$ of degree~$(2-h^+,0)$; see~\eqref{eq;bordefimplpol} for details.}
	\begin{thm}\label{thm;Fexpintro}
	Let~$f\in S^k_1$, and let~$\intfunct\colon G\to\CC$ be the defining integrals of the Kudla--Millson lift~$\KMliftbase(f)$.	
	The Fourier coefficient of~$\intfunct$ associated to~$\lambda\in\brK$, such that~$q(\lambda)>0$, is
	\ba\label{eq;foucoefcorintro}
	&\frac{\sqrt{2}}{|\genU_{z^\perp}|}
	\sum_{h^+=0}^2
	\sum_{\substack{t\ge 1\\ t|\lambda}}
	\Big(\frac{t}{2i}\Big)^{h^+}
	c_{q(\lambda)/t^2}(f)
	\int_0^{+\infty} y^{k-h^+-3/2}\exp\Big(-\frac{2\pi y \lambda_{w^\perp}^2}{t^2} -
	\frac{\pi t^2}{2y\genU_{z^\perp}^2}
	\Big)
	\\
	&\quad\times\exp(-\Delta/8\pi y)\big(\polw{(\alpha,\beta),\borw}{h^+}{0}\big)\big(\borw(\lambda/t)\big)dy,
	\ea
	where we say that an integer $t\ge1$ divides $\lambda\in\brK$, in short~$t|\lambda$, if and only if~$\lambda/t$ is still a lattice vector in $\brK$.
	
	The Fourier coefficient of~$\intfunct$ associated to~$\lambda=0$, i.e.\ the constant term of the Fourier series, is
	\ba\label{eq;intconstcoefintro}
	\int_{\SL_2(\ZZ)\backslash\HH}\frac{y^{k+1/2}f(\tau)}{\sqrt{2}\, |\genU_{z^\perp}|}
	\,
	\overline{\Theta_{\brK}(\tau,\borw,\polw{(\alpha,\beta),\borw}{0}{0})}\,\frac{dx\,dy}{y^2}.
	\ea
	
	In all remaining cases, the Fourier coefficients are trivial.
	\end{thm}
%	In particular, we will show that the first summand of the right-hand side of~\eqref{eq;introunfformgen1} is actually the constant term of the Fourier expansion of~$\intfunct$.
	In Section~\ref{sec;injKMgenus1} we illustrate how to deduce the injectivity of~$\KMliftbase$ by means of the Fourier expansion provided by Theorem~\ref{thm;Fexpintro}.
	The idea is as follows.
	\textcolor{\myblack}{Since the cotangent vectors~$\omega_{\alpha,b+1}\wedge\omega_{\beta,b+2}$ are linearly independent, if the lift~$\KMliftbase(f)$ of a cusp form~$f$ vanishes, then all defining integrals~$\intfunct$ in~\eqref{eq;introKMl1nonunfold} are zero.
	This implies that all Fourier coefficients of~$\intfunct$ appearing in Theorem~\ref{thm;Fexpintro} are zero for every~$g\in G$.
	We show that for every~$\lambda\in\brK$ of positive norm there exists an isometry~$g\in G$ such that some of the integrals appearing in~\eqref{eq;foucoefcorintro} are non-zero; see Lemma~\ref{lemma;technnonzeropolinprinj}.
	An easy inductive argument on the primitivity of~$\lambda$ implies that the Fourier coefficient~\eqref{eq;foucoefcorintro} vanishes for such choices of~$\lambda$ and~$g$ only if~$c_\lambda(f)=0$.
	Summarizing, if~$\KMliftbase(f)=0$, then all Fourier coefficients of~$f$ vanish, therefore~$f$ is trivial.}
	
	The results above are illustrated in the case of even \emph{unimodular} lattices~$L$ of signature~$(b,2)$, where~$b>2$.
	In Section~\ref{sec;nonunimodlattgen1} we generalize them to non-unimodular lattices.
	We also provide an alternative proof of the known injectivity of~$\KMliftbase$ in this setting, namely~\cite[Theorem~$5.3$]{br;converse}, stated below.
	For every integer~$N$, we write~$U(N)$ for the lattice~$U$ as a~$\ZZ$-module but equipped with the rescaled quadratic form~$Nq(\cdot)$.
	\begin{thm}[Bruinier]
	Let $L$ be an even lattice of signature $(b,2)$, where~$b>2$, that splits off~${U(N)\oplus U}$, for some positive integer~$N$.
	The Kudla--Millson theta lift~$\KMliftbase$ is injective. 
	\end{thm}
	
	\subsection*{Acknowledgments}
	We are grateful to Jan Bruinier and Martin Möller for suggesting this project, for their encouragement, and for several useful conversations we shared.
	\textcolor{\myblack}{We would also like to thank Xiaoyu Zhang for useful discussions, as well as the anonymous referee for a careful reading of the present paper.}
	This work is a result of our PhD~\cite{zufthesis}, which was founded by the LOEWE research unit ``Uniformized Structures in Arithmetic and Geometry'', and by the Collaborative Research Centre TRR~326 ``Geometry and Arithmetic of Uniformized Structures'', project number~444845124.
	
	%%%%%%%%%%%%%%%%%%%
	\section{The Kudla--Millson Schwartz function}\label{sec;compDp2}

%	\textcolor{Cyan}{\textbf{If~$L$ is not unimodular:} This section is untouched.
%	In fact, we do not use~$L$ anywhere here, but instead only~$L\otimes\RR$.}
	
	Let~$V$ be a real vector space endowed with a symmetric bilinear form~$(\cdot{,}\cdot)$ of signature~$(b,2)$, where~$b>2$.
	Its associated quadratic form is defined as $q(\cdot)=(\cdot{,}\cdot)/2$.
	In this section we provide an explicit formula of the Kudla--Millson Schwartz function~$\varphi_{\text{\rm KM}}$ attached to~$V$, following the wording of~\cite[Section~$2$ and Section~$4$]{brfutwo} and~\cite[Section~$7$]{ku;algcycle}.\\
	
	Let $(\basevec_j)_j$ be an orthogonal basis of $V$ such that $(\basevec_\alpha,\basevec_\alpha)=1$ for every $\alpha=1,\dots,b$, and~${(\basevec_\mu,\basevec_\mu)=-1}$ for $\mu=b+1,b+2$.
	We denote the corresponding coordinate functions by~$x_\alpha$ and~$x_\mu$.
	The choice of~$(\basevec_j)_j$ induces an isometry~${g_0\colon V\to\RR^{b,2}}$, where~$\RR^{b,2}$ is the real space~$\RR^{b+2}$ endowed with the standard quadratic form of signature~$(b,2)$ defined as
	\bes
	q_0\big((x_1,\dots,x_{b+2})^t\big)=\sum_{j=1}^b x_j^2/2 - x_{b+1}^2/2 - x_{b+2}^2/2,\qquad\text{for every $(x_1,\dots,x_{b+2})^t\in\RR^{b+2}$.}
	\ees
	
	The Grassmannian associated to $V$ is the set of negative definite planes in~$V$, namely
	\bes
	\Gr(V)=\{z\subset V : \text{$\dim z=2$ and $(\cdot{,}\cdot)|_z<0$}\}.
	\ees
	The plane~$z_0$ spanned by~$\basevec_{b+1}$ and~$\basevec_{b+2}$ is the \emph{base point} of~$\Gr(V)$.
	The Hermitian symmetric space arising as the quotient~$\hermdom=G/K$, where~$G=\SO(V)\cong\SO(b,2)$ and~$K$ is the maximal compact subgroup of~$G$ stabilizing $z_0$, may be identified with~$\Gr(V)$; see~\cite[Part~$2$, Section~$2.4$]{1-2-3}.
	From now on, we write~$\hermdom$ and~$\Gr(V)$ interchangeably.
	
	For every $z\in \hermdom$, we define the \emph{standard majorant} $(\cdot{,}\cdot)_z$ as
	\be\label{eq;standardmajorant}
	(\genvec,\genvec)_z=(\genvec_{z^\bot},\genvec_{z^\bot})-(\genvec_z,\genvec_z),
	\ee
	where $\genvec=\genvec_z+\genvec_{z^\perp}\in V$ is written with respect to the decomposition~${V=z\oplus z^\perp}$.
	We will often write~$v^2$ in place of~$(v,v)$, and the standard majorant as~$(\genvec,\genvec)_z=\genvec_{z^\perp}^2-\genvec_z^2$.
	
	Let $\mathfrak{g}$ be the Lie algebra of $G$, and let $\mathfrak{g}=\mathfrak{p}+\mathfrak{k}$ be its Cartan decomposition.
	It is well-known that~$\mathfrak{p}\cong\mathfrak{g}/\mathfrak{k}$ is isomorphic to the tangent space of~$\hermdom$ at the base point~$z_0$.
	With respect to the basis of $V$ chosen above, we have
	\be\label{eq;mathfracpiso}
	\mathfrak{p}\cong\left\{\left(
	\begin{smallmatrix}
	0 & X\\
	X^t & 0
	\end{smallmatrix}
	\right)\,|\,\text{$X\in\Mat_{b,2}(\RR)$}\right\}\cong \Mat_{b,2}(\RR).
	\ee
	We may assume that the chosen isomorphism is such that the complex structure on~$\mathfrak{p}$ is given as the right-multiplication by~$J=\big(\begin{smallmatrix}0 & 1\\ -1 & 0\end{smallmatrix}\big)\in\GL_2(\RR)$ on~$\Mat_{b,2}(\RR)$.
	
	To simplify the notation, we write~$e(t)=\exp(2\pi i t)$, for every~$t\in\CC$, and denote by~${\sqrt{t}=t^{1/2}}$ the principal branch of the square root, so that~$\arg(\sqrt{t})\in(-\pi/2,\pi/2]$.
	If~$s\in\CC$, we define~$t^s=e^{s\Log(t)}$, where~$\Log(t)$ is the principal branch of the logarithm.
	\begin{defi}\label{defi;schrmod}
	We denote by $\omega_\infty$ the Schrödinger model of (the restriction of) the Weil representation of~$\Mp_2(\RR)\times \bigO(V)$ acting on the space~$\mathcal{S}(V)$ of Schwartz functions on~$V$.
	The action of~$\bigO(V)$ is defined as
	\bes
	\omega_\infty(g)\varphi(\genvec)=\varphi\big(g^{-1}(\genvec)\big),
	\ees
	for every~$\varphi\in\mathcal{S}(V)$ and~$g\in \bigO(V)$.
	The action of~$\Mp_2(\RR)$ is given by
	\ba\label{eq;actMp2Schrod}
	\omega_\infty\left(\begin{smallmatrix}
	1 & x\\ 0 & 1
	\end{smallmatrix}\right)\varphi(\genvec)&=e(x q(\genvec))\varphi(\genvec),\quad\text{for every $x\in\RR$},\\
	\omega_\infty\left(\begin{smallmatrix}
	a & 0\\ 0 & a^{-1}
	\end{smallmatrix}\right)\varphi(\genvec)&=a^{(b+2)/2}\varphi(a\genvec),\quad\text{for every $a>0$},\\
	\omega_\infty(S)\varphi(\genvec)&=\sqrt{i}^{2-b}\widehat{\varphi}(-\genvec),
	\ea
	where $S=\big(\big(\begin{smallmatrix}
	0 & -1\\ 1 & 0
	\end{smallmatrix}\big),\sqrt{\tau}\big)$, and $\widehat{\varphi}(\xi)=\int_V\varphi(\genvec)e^{2\pi i (\genvec,\xi)}dv$ is the Fourier transform of $\varphi$.
	\end{defi}
	
	The \emph{standard Gaussian of}~$\RR^{b,2}$ is defined as
	\bes
	\varphi_0(x_1,\dots,x_{b+2})=e^{-\pi\sum_{j=1}^{b+2}x_j^2},\qquad\text{for every $(x_1,\dots,x_{b+2})^t\in\RR^{b+2}$}.
	\ees
	The \emph{standard Gaussian of} $V$ is the Schwartz function~$\varphi_0\circ g_0$, where~$g_0$ is the isometry arising from the choice of the basis $(\basevec_j)_j$ of $V$.
%	It is~$K$-invariant with respect to the action given by the Schrödinger model, namely
%	\be\label{eq;actK}
%	\omega_\infty(\kappa)\varphi_0\big(g_0(\genvec)\big)=\varphi_0\big(g_0\circ \kappa^{-1}(\genvec)\big)=\varphi_0\big(g_0(\genvec)\big),\qquad\text{for every $\kappa\in K$ and $\genvec\in V$}.
%	\ee
%	
%	We denote by $\mathcal{S}(V)^K$ the space of $K$-invariant Schwartz functions on~$V$.
%	We say that a function~$\varphi\in\mathcal{S}(V)\otimes C^\infty (\hermdom)$ is~$G$-invariant if
%	\be\label{eq;clarinfCinf}
%	g^*\varphi\big(g(\genvec)\big)=\varphi(v),\qquad\text{for every $\genvec\in V$},
%	\ee
%	where $g^*\varphi(\genvec)$ is the pullback of $\varphi(\genvec)\in C^\infty(\hermdom)$ induced by the action of~$G$ on~$\hermdom$; see e.g.~{\cite[$(3.3)$]{Kudla;speccycl} and~\cite[p.~$23$]{Livinsky}}.
%	We remark that
%	\be\label{eq;isoschwKinschwKDinf}
%	\big[\mathcal{S}(V)\otimes C^\infty (\hermdom)\big]^G\cong \mathcal{S}(V)^K,
%	\ee
%	where the isomorphism is given by evaluating~$G$-invariant functions on the left-hand side of~\eqref{eq;isoschwKinschwKDinf} at the base point $z_0\in \hermdom$.
%	\begin{ex}
%	The function corresponding to the standard Gaussian $\varphi_0\circ g_0\in\mathcal{S}(V)^K$ under the isomorphism~\eqref{eq;isoschwKinschwKDinf} is~$\varphi_0(\genvec,z)=e^{-\pi(\genvec,\genvec)_z}$, where $(\cdot{,}\cdot)_z$ is the standard majorant defined in~\eqref{eq;standardmajorant}.
%	\end{ex}

	Let~\textcolor{\myblack}{$\mathcal{A}^2(\hermdom)$, resp.}~$\mathcal{Z}^2(\hermdom)$, be the space of \textcolor{\myblack}{differential, resp.}~closed,~$2$-forms on~$\hermdom$.
	The group~$G$ acts on~$\textcolor{\myblack}{\mathcal{A}^2}(\hermdom)$ by pullback.
	We say that $\varphi\in\mathcal{S}(V)\otimes \textcolor{\myblack}{\mathcal{A}^2}(\hermdom)$ is~$G$-invariant if
	\be\label{eq;clarinf2forms}
	g^*\varphi\big(g(\genvec)\big)=\varphi(\genvec)\qquad\text{for every $\genvec\in V$},
	\ee
	where $g^*\varphi(\genvec)$ is the pullback of $\varphi(\genvec)\in \textcolor{\myblack}{\mathcal{A}^2}(\hermdom)$ induced by the action of $g$ on $\hermdom$.
	
%	We now recall the construction of the Kudla--Millson Schwartz function~$\varphi_{\text{\rm KM}}$.
	We remark that
	\be\label{eq;isoschwartzbigwedge}
	\big[\mathcal{S}(V)\otimes \textcolor{\myblack}{\mathcal{A}^2}(\hermdom)\big]^G\cong\Big[\mathcal{S}(V)\otimes{\bigwedge}^2(\mathfrak{p}^*)\Big]^K,
	\ee
	where the isomorphism is given by evaluating~$G$-invariant functions on the left-hand side of~\eqref{eq;isoschwartzbigwedge} at the base point~$z_0$ of~$\hermdom$.
	Therefore, we can define any $G$-invariant function~$\varphi$ in~$\big[\mathcal{S}(V)\otimes \textcolor{\myblack}{\mathcal{A}^2}(\hermdom)\big]^G$ firstly as an element of~$\big[\mathcal{S}(V)\otimes{\bigwedge}^2(\mathfrak{p}^*)\big]^K$, and then spread it to the whole~$\hermdom$ under the action of~$G$.
	The Kudla--Millson Schwartz function~$\varphi_{\text{\rm KM}}$ is constructed in this way, as we now quickly recall.
	
	\begin{defi}\label{defi;KMfunct}
	We denote by $X_{\alpha,\mu}$, where~$1\le\alpha\le b$ and~$b+1\le\mu\le b+2$, the basis vectors of~$\Mat_{b,2}(\RR)$ given by matrices with~$1$ at the~$(\alpha,\mu-b)$-th entry and zero otherwise.
	These vectors provide a basis of~$\mathfrak{p}$ under the isomorphism~\eqref{eq;mathfracpiso}.
	Let~$\omega_{\alpha,\mu}$ be the element of the corresponding dual basis which extracts the $(\alpha,\mu-b)$-th coordinate of elements in~$\mathfrak{p}$, and let~$A_{\alpha,\mu}$ be the left multiplication by~$\omega_{\alpha,\mu}$.
	The function~$\varphi_{\text{\rm KM}}$ is defined applying the operator
	\bes
	\KMoper=\frac{1}{2}\prod_{\mu=b+1}^{b+2}\Big[\sum_{\alpha=1}^b\Big(x_\alpha-\frac{1}{2\pi}\frac{\partial}{\partial x_\alpha}\Big)\otimes A_{\alpha,\mu}\Big]
	\ees
	to the standard Gaussian $(\varphi_0\circ g_0)\otimes 1\in \big[\mathcal{S}(V)\otimes{\bigwedge}^0(\mathfrak{p}^*)\big]^K$, namely~$\varphi_{\text{\rm KM}}=\KMoper\big((\varphi_0\circ g_0)\otimes 1\big).$
	\end{defi}
	\textcolor{\myblack}{It is easy to rewrite~$\varphi_{\text{\rm KM}}\in\big[\mathcal{S}(V)\otimes{\bigwedge}^2(\mathfrak{p}^*)\big]^K$ explicitly as}
	\ba\label{eq;finphiKMb2}
	\varphi_{\text{\rm KM}}(\genvec,z_0)
	=\sum_{\alpha,\beta=1}^b \Big(\Gpol_{(\alpha,\beta)}\varphi_0\Big)\big(g_0(\genvec)\big)\omega_{\alpha, b+1}\wedge \omega_{\beta, b+2},
	\ea
	where
	\be\label{eq;comppolPab}
	\Gpol_{(\alpha,\beta)}\big(g_0(v)\big)\coloneqq\begin{cases}
	\polab\big(g_0(\genvec)\big), & \text{if $\alpha\neq\beta$,}\\
	\polab\big(g_0(\genvec)\big)-\frac{1}{2\pi}, & \text{otherwise,}
	\end{cases}
	\qquad\text{and}\qquad
	\polab\big(g_0(\genvec)\big)\coloneqq 2x_\alpha x_\beta,
	\ee
	for every $\genvec\in V$ with~$g_0(\genvec)=(x_1,\dots,x_{b+2})^t\in\RR^{b,2}$.
	It is easy to check that
	\bes
	\Gpol_{(\alpha,\beta)}\big(g_0(\genvec)\big)=
	\begin{cases}
	\frac{1}{2}H_1(x_\alpha)H_1(x_\beta) & \text{if $\alpha\neq\beta$,}\\
	\frac{1}{4\pi}H_2(\sqrt{2\pi}x_\alpha) & \text{otherwise,}
	\end{cases}
	\ees
	where $H_n(t)$ is the $n$-th Hermite polynomial.
	
	\begin{rem}\label{rem;finformphiKMglobal}
	In~\eqref{eq;finphiKMb2} we constructed~$\varphi_{\text{\rm KM}}$ as a~$K$-invariant function in~${\mathcal{S}(V)\otimes{\bigwedge}^2(\mathfrak{p}^*)}$.
	To construct a global $G$-invariant function in~$\big[\mathcal{S}(V)\otimes \textcolor{\myblack}{\mathcal{A}^2}(\hermdom)\big]^G$, we may spread~$\varphi_{\text{\rm KM}}$ on the whole~$G$ by means of~\eqref{eq;isoschwartzbigwedge} as follows.
	Let~$z\in \hermdom$, and let~$g\in G$ be any isometry mapping~$z$ to~$z_0$.
	By virtue of~\eqref{eq;clarinf2forms} we have
	\be\label{eq;finformglobspreadphikm}
	\varphi_{\text{\rm KM}}(\genvec,z)=
	g^*\varphi_{\text{\rm KM}}\big(g(\genvec),z_0\big)
	=\sum_{\alpha,\beta=1}^b \Big(\Gpol_{(\alpha,\beta)}\varphi_0\Big)\big(g_0\circ g(\genvec)\big)\cdot g^*(\omega_{\alpha, b+1}\wedge \omega_{\beta, b+2}).
	\ee
	Since~$\varphi_{\text{\rm KM}}$ is~$K$-invariant at the base point~$z_0$, the value~$\varphi_{\text{\rm KM}}(v,z)$ given by~\eqref{eq;finformglobspreadphikm} does not depend on the choice of the isometry~$g$ mapping~$z$ to~$z_0$.
	\textcolor{\myblack}{Furthermore, the function~$\varphi_{\text{\rm KM}}$ is a \emph{closed} form on~$\hermdom$; see~\cite{kumi;harmI}.}
	\end{rem}
	
%	We conclude this section with the following result from~\cite{kumi;harmI}.
%	\begin{lemma}\label{lemma;KMphieigen}
%	The Kudla--Millson Schwartz function $\varphi_{\text{\rm KM}}$ is an eigenfunction for the action of~$\SO(2)$ via the Schrödinger model $\omega_\infty$.
%	More precisely
%	\be\label{eq;KMphieigen}
%	\omega_\infty(R_\theta)\varphi_{\text{\rm KM}}=e^{ik\theta}\varphi_{\text{\rm KM}},\qquad\text{for every $R_\theta=\left(\begin{smallmatrix}\cos\theta & \sin\theta\\
%	-\sin\theta & \cos\theta\end{smallmatrix}\right)\in\SO(2)$.}
%	\ee
%	\end{lemma}
	
	%%%%%%%%%%%%%%%%%%%
	\section{The Kudla--Millson theta form}\label{sec;KMthetagen1}
	
	After a quick overview of some well-known results, in this section we provide an explicit formula of~$\Theta(\tau,z,\varphi_{\text{\rm KM}})$ by means of the one of~$\varphi_{\text{\rm KM}}$ computed in Section~\ref{sec;compDp2}.
	We then introduce Borcherds' formalism~\cite{bo;grass}, and show how to rewrite the Kudla--Millson theta form in terms of Siegel theta functions.\\
	
	Let~$\big(L,(\cdot{,}\cdot)\big)$ be a unimodular lattice of signature $(b,2)$, where~$b>2$.
	We fix once and for all an integer $k=1+b/2$ and an orthogonal basis~$(\basevec_j)_j$ of $V=L\otimes\RR$ such that~${\basevec_j^2=1}$, for every $j=1,\dots,b$, and $\basevec_{b+1}^2=\basevec_{b+2}^2=-1$.
	The choice of such a basis induces an isometry~${g_0\colon V\to \RR^{b,2}}$.
	We denote the Grassmannian~$\Gr(V)$ also by~$\Gr(L)$.
	This is identified with the Hermitian symmetric domain~$\hermdom$ arising from~$G$.
	
	We recall from Definition~\ref{def;KMthetaformgenus1} that the Kudla--Millson theta form is defined as
	\bes
	\Theta(\tau,z,\varphi_{\text{KM}})=y^{-k/2}\sum_{\lambda\in L}\big(\omega_\infty(g_\tau)\varphi_{\text{KM}}\big)(\lambda,z),
	\ees
	for every $\tau=x+iy\in\HH$ and $z\in\Gr(L)$, where $g_\tau=\left(\begin{smallmatrix}
	1 & x\\ 0 & 1
	\end{smallmatrix}\right)\Big(\begin{smallmatrix}
	\sqrt{y} & 0\\ 0 & \sqrt{y}^{-1}
	\end{smallmatrix}\Big)$ is the standard element of~$\SL_2(\RR)$ mapping~${i\in\HH}$ to~$\tau$.
	
	Let $A^k_1$ be the space of analytic functions on $\HH$ satisfying the classical weight~$k$ modular transformation property with respect to~$\SL_2(\ZZ)$.
	The Kudla--Millson theta form~$\Theta(\tau,z,\varphi_{\text{\rm KM}})$ is a non-holomorphic modular form with respect to the variable $\tau\in\HH$, and a closed $2$-form with respect to the variable $z\in\hermdom$, in short~$\Theta(\tau,z,\varphi_{\text{\rm KM}})\in A^k_1\otimes\mathcal{Z}^2(\hermdom)$.
	In fact, the Kudla--Millson theta form is~$\Gamma$-invariant, for every subgroup~$\Gamma$ of finite index in~$\GG(L)$.
	This implies that~$\Theta(\tau,z,\varphi_{\text{\rm KM}})$ descends to an element of~$A^k_1\otimes\mathcal{Z}^2(X_\Gamma)$, where~${X_\Gamma=\Gamma\backslash\hermdom}$ is the orthogonal Shimura variety arising from~$\Gamma$.
	
	Kudla and Millson showed that the $n$-th Fourier coefficient of~$\Theta(\tau,z,\varphi_{\text{\rm KM}})$ is a Poincar\'e dual form for the~$n$-th Heegner divisor on~$X_\Gamma$.
	Moreover, they proved that the cohomology class~$[\Theta(\tau,z,\varphi_{\text{\rm KM}})]$ is a holomorphic modular form of weight~$k$ with values in~$H^{1,1}(X_\Gamma)$, and coincides with Kudla's generating series of Heegner divisors; see~\cite{kumi;intnum} and~\cite[Theorem~3.1]{Kudla;speccycl}.
		
	Using the spread~\eqref{eq;finformglobspreadphikm} of~$\varphi_{\text{\rm KM}}$, we may rewrite the Kudla--Millson theta form as
	\ba\label{eq;KMdeg1recallexpl}
	\Theta(\tau,z,\varphi_{\text{\rm KM}})=\sum_{\alpha,\beta=1}^b\underbrace{y^{-k/2}\sum_{\lambda\in L}\Big(\omega_\infty(g_\tau)(\Gpol_{(\alpha,\beta)}\varphi_0)\Big)\big(g_0\circ g(\lambda)\big)}_{\eqqcolon \FFab(\tau,g)}\cdot\, g^*(\omega_{\alpha,b+1}\wedge\omega_{\beta,b+2}),
	\ea
	where $g\in G$ is any isometry of $V=L\otimes\RR$ mapping $z$ to $z_0$, and~$\Gpol_{(\alpha,\beta)}$ is the polynomial defined in~\eqref{eq;comppolPab}.
	Since the Kudla--Millson Schwartz function $\varphi_{\text{\rm KM}}$ is the spread to the whole~$\hermdom=\Gr(L)$ of an element of~$\mathcal{S}(V)\otimes{\bigwedge}^2(\mathfrak{p}^*)$ which is~$K$-invariant, the definition of~$\Theta(\tau,z,\varphi_{\text{\rm KM}})$ does not depend on the choice of~$g$ mapping~$z$ to~$z_0$.
	
	One of the goals of Section~\ref{sec;follBorcherds} is to rewrite the auxiliary functions $\FFab(\tau,g)$ arising as in~\eqref{eq;KMdeg1recallexpl} in terms of Siegel theta functions.
		
	%%%%%%%%%%%%%%%%%%%%%%
	
	\subsection{Siegel theta functions}\label{sec;Siegtheta}
	Let~$M$ be an indefinite even unimodular lattice of general signature~$(p,q)$.
	In this section we quickly recall how to construct Siegel theta functions attached to~$M$, as in~\cite[Section~$4$]{bo;grass}.
	This is ancillary to Section~\ref{sec;follBorcherds}, where we will rewrite the Kudla--Millson theta form associated to a lattice~$L$ of signature~$(b,2)$ in terms of Siegel theta functions attached to~$L$ and certain Lorentzian sublattices.\\
	
	We denote by~$\Gr(M)$ the Grassmannian of negative-definite subspaces of dimension~$q$ in~$M\otimes\RR$.
	This is (a realization of) the symmetric domain associated to the linear algebraic group~$G=\SO(M\otimes\RR)$.
	We fix once for all an isometry~$g_0\colon M\otimes\RR\to\RR^{p,q}$ and a base point~$z_0=g_0^{-1}(\RR^{0,q})$ of~$\Gr(M)$, where~$\RR^{0,q}$ is the span of the last~$q$ vectors of the standard basis of~$\RR^{p,q}$.
	The coordinates of~$\RR^{p,q}$ are denoted by~$x_j$.
	
%	We recall that the Laplacian~$\Delta$ on~$\RR^{p,q}$ and its exponential are the operators defined respectively as
	We recall from~\cite[Section~$3$]{bo;grass} the standard operators
	\bes
	\Delta=\sum_{j=1}^{p+q}\frac{\partial^2}{\partial x_j^2}\qquad\text{and}\qquad\exp\Big(-\frac{\Delta}{8\pi y}\Big)=\sum_{m=0}^\infty\frac{1}{m!}\Big(-\frac{\Delta}{8\pi y}\Big)^m
	\ees
	acting on smooth functions defined on~$\RR^{p,q}$.
	Note that~$\Delta$ is the Laplacian of~$\RR^{p+q}$ and not the one of~$\RR^{p,q}$.
	\begin{defi}\label{def;deg1genSiegth}
	Let~$\pol$ be a homogeneous polynomial on $\RR^{p,q}$ of degree $(m^+,m^-)$, i.e.\ homogeneous of degree $m^+$ in the first~$p$ variables, and homogeneous of degree~$m^-$ in the last~$q$ variables.
	The Siegel theta function $\Theta_M$ associated to~$M$ and~$\pol$ is defined as
	\ba\label{eq;bormegagentheta}
	\Theta_M(\tau,\boralpha,\borbeta,g,\pol)
	&=
	\sum_{\lambda\in M}\exp(-\Delta /8\pi y)(\pol)\big(g_0\circ g(\lambda+\borbeta)\big)\\
	&\quad
	\times e\Big(\tau q\big((\lambda+\borbeta)_{z^\perp}\big)+\bar{\tau}q\big((\lambda+\borbeta)_z\big)-(\lambda+\borbeta/2,\boralpha)\Big),
	\ea
	for every $\tau=x+iy\in\HH$, $\boralpha,\borbeta\in M\otimes\RR$, and~$g\in G$, where~$z=g^{-1}(z_0)\in\Gr(M)$.
	
	If~$\delta=\nu=0$, then we drop them from the notation and write only~$\Theta_M(\tau,g,\pol)$.
	\end{defi}
	\begin{rem}\label{rem;harmpol}
	If the polynomial~$\pol$ is \emph{harmonic}, i.e.~$\Delta\pol=0$, then~${\exp\big(-\Delta/8\pi y\big)(\pol)=\pol}$.
	This is the case of the polynomials~$\polab$ defined in~\eqref{eq;comppolPab}, if~$\alpha\neq\beta$.
	Instead, the polynomials~$\pol_{(\alpha,\alpha)}$ are homogeneous but non-harmonic, for every~$\alpha$.
	\end{rem}
	The theta function~$\Theta_M$ may be regarded as a non-holomorphic modular form with respect to the variable~$\tau\in\HH$.
	This is~\cite[Theorem~4.1]{bo;grass}, which we state in our notation as follows.
	\begin{thm}[Borcherds]\label{thm;borchmodtransftheta4}
	Let $M$ be a unimodular lattice of signature $(b^+,b^-)$.
	If~$\pol$ is a homogeneous polynomial of degree $(m^+,m^-)$ on~$\RR^{b^+,b^-}$, then
	\bas
	\Theta_M(\gamma\cdot\tau, a\boralpha+b\borbeta,c\boralpha+d\borbeta,g,\pol)=(c\tau+d)^{b^+/2+m^+}(c\bar{\tau}+d)^{b^-/2+m^-}\Theta_M(\tau,\boralpha,\borbeta,g,\pol),
	\eas
	for every $\gamma=\left(\begin{smallmatrix}
	a & b\\ c & d
	\end{smallmatrix}\right)\in\SL_2(\ZZ)$.
	\end{thm}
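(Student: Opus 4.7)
The plan is to reduce the transformation law to the two standard generators $T=\left(\begin{smallmatrix} 1 & 1\\ 0 & 1\end{smallmatrix}\right)$ and $S=\left(\begin{smallmatrix} 0 & -1\\ 1 & 0\end{smallmatrix}\right)$ of $\SL_2(\ZZ)$. A direct verification shows that the automorphy factor $(c\tau+d)^{b^+/2+m^+}(c\bar{\tau}+d)^{b^-/2+m^-}$ behaves as a cocycle under the paired $\SL_2(\ZZ)$-action on $(\boralpha,\borbeta)$, so it is enough to check the identity for $T$ and $S$.

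For $T$, the statement to prove reads
\[
\Theta_M(\tau+1,\boralpha+\borbeta,\borbeta,g,\pol)=\Theta_M(\tau,\boralpha,\borbeta,g,\pol).
\]
This follows from a term-by-term comparison of the sums. The shift $\tau\mapsto\tau+1$ multiplies each summand by $e\bigl(q((\lambda+\borbeta)_{z^\perp})+q((\lambda+\borbeta)_z)\bigr)=e\bigl(q(\lambda+\borbeta)\bigr)$, while the shift $\boralpha\mapsto\boralpha+\borbeta$ contributes $e\bigl(-(\lambda+\borbeta/2,\borbeta)\bigr)=e\bigl(-(\lambda,\borbeta)-q(\borbeta)\bigr)$. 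Expanding $q(\lambda+\borbeta)=q(\lambda)+(\lambda,\borbeta)+q(\borbeta)$ leaves a net phase $e(q(\lambda))$, which equals $1$ because $M$ is even.

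The bulk of the work lies in the $S$-transformation,
\[
\Theta_M(-1/\tau,-\borbeta,\boralpha,g,\pol)=\tau^{b^+/2+m^+}\bar{\tau}^{b^-/2+m^-}\Theta_M(\tau,\boralpha,\borbeta,g,\pol).
\]
The strategy is Poisson summation on the self-dual lattice $M$, applied to the summand viewed as a Schwartz function of $\lambda\in M\otimes\RR$. The Gaussian part splits along the orthogonal decomposition $M\otimes\RR=z^\perp\oplus z$ into a positive-definite piece weighted by $\tau$ and a negative-definite piece weighted by $\bar{\tau}$, so the Fourier transform decouples into a positive- and a negative-signature factor. The core computation is to show that the Fourier transform of $\exp(-\Delta/8\pi y)(\pol)$ times its Gaussian, once rescaled under $\tau\mapsto-1/\tau$, reproduces a function of the same shape with $-1/\tau$ in place of $\tau$, multiplied by the desired automorphy factor.

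The main obstacle is the Fourier-transform identity in the non-harmonic case. If $\pol$ is harmonic of bidegree $(m^+,m^-)$, then $\exp(-\Delta/8\pi y)(\pol)=\pol$ by Remark~\ref{rem;harmpol}, and the computation reduces to the classical closed-form Fourier transform of a Gaussian times a harmonic polynomial, from which the factor $\tau^{b^+/2+m^+}\bar{\tau}^{b^-/2+m^-}$ emerges by separation of variables. For general $\pol$ one decomposes $\pol$ into harmonic pieces multiplied by powers of the norm form on each factor; the role of the heat-kernel operator $\exp(-\Delta/8\pi y)$ is precisely to correct these non-harmonic components so as to intertwine with Poisson summation. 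Tracking this interplay between the heat operator and the Fourier transform is the technical heart of the proof, carried out in~\cite[Section~4]{bo;grass} by reducing, via a generating-function trick, to the single Fourier transform of a pure Gaussian.
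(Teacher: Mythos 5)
The paper does not prove this statement at all—it is quoted, in the paper's notation, directly as \cite[Theorem~4.1]{bo;grass}—so the only proof to compare against is Borcherds' original one, and your sketch reproduces it faithfully: reduction to the generators $T$ and $S$, the elementary phase computation for $T$ (correctly using that $M$ is even so that $e(q(\lambda))=1$), and Poisson summation on the self-dual lattice for $S$ combined with the fact that $\exp(-\Delta/8\pi y)(\pol)$ times the split Gaussian is, up to the stated automorphy factor, an eigenfunction of the Fourier transform, established by reducing to powers of linear forms and a shifted Gaussian integral. The one step you defer—the intertwining of the heat operator with the Fourier transform—is precisely Borcherds' lemma in \cite[Section~3]{bo;grass} (Section~3, not Section~4), and deferring it is consistent with the paper's own treatment, which cites the result without proof.
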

	
	In the remaining part of this section, we illustrate how to rewrite the Siegel theta function~$\Theta_M$ with respect to the split of some hyperbolic plane of~$M$, following the wording of~\cite[Section~5]{bo;grass}.
	This will be essential to unfold the theta integrals defining the Kudla--Millson lift.
	Since we do not need to use Borcherds' formalism in all its generality, from now on we restrict ourselves to the case of an even unimodular lattice~$L$ of signature~$(b,2)$ in place of~$M$.
	
	It is well known that any unimodular lattice~$L$ as above splits (up to isomorphisms) into an orthogonal direct sum of sublattices as
	\be\label{eq;splitofL}
	L= \underbrace{E_8\oplus\dots\oplus E_8\oplus U}_{=\brK}\oplus\, U,
	\ee
	where~$E_8$ is the~$8$-th root lattice and~$U$ is the hyperbolic lattice of rank~$2$.
	Let~$\brK$ be a Lorentzian unimodular sublattice of~$L$ defined as the orthogonal complement of some hyperbolic plane~$U$, as in~\eqref{eq;splitofL}.
	We may assume that the orthogonal basis~$(\basevec_j)_j$ of~$L\otimes\RR$ chosen above is such that~$\brK\otimes\RR$ is generated by~${\basevec_1,\dots,\basevec_{b-1},\basevec_{b+1}}$, and that~$U\otimes\RR$ is generated by~$\basevec_b$ and~$\basevec_{b+2}$.
	
	Let~$\genU, \genUU$ be a basis of~$U$ such that~$(\genU,\genU)=(\genUU,\genUU)=0$ and~$(\genU,\genUU)=1$.
	We may suppose that
	\be\label{eq;choiceofuu'}
	\genU=\frac{\basevec_b+\basevec_{b+2}}{\sqrt{2}}\qquad\text{and}\qquad\genUU=\frac{\basevec_b-\basevec_{b+2}}{\sqrt{2}}.
	\ee
	In this way, we may rewrite~$L$ as the orthogonal direct sum of~$\brK$ with~$\ZZ u\oplus \ZZ u'$.
	
	\begin{defi}\label{defi;not&deffromborw}
	Let~$z\in \Gr(L)$, and let~$g\in G$ be an isometry mapping~$z$ to~$z_0$. 
	We denote by~$w$ the orthogonal complement of $\genU_z$ in $z$, and by $w^\perp$ the orthogonal complement of $\genU_{z^\perp}$ in $z^\perp$.
	We denote by~$\borw \colon L\otimes\RR\to L\otimes\RR$ the linear map defined as~$\borw(\genvec)=g(\genvec_{w^\perp}+\genvec_w)$.
	\end{defi}
	We remark that~$\borw$ is an isometry from $w^\perp\oplus w$ to its image, and that it vanishes on~$\RR\genU_{z^\perp}\oplus\RR\genU_z$.
	
	\begin{defi}\label{def;bordefimplpol}
	Let $z\in \Gr(L)$, and let $g\in G$ be an isometry mapping~$z$ to~$z_0$.
	For every homogeneous polynomial $\pol$ of degree $(m^+,m^-)$ on~$\RR^{b,2}$, we define the homogeneous polynomials~$\pol_{\borw,h^+,h^-}$, of degrees respectively $(m^+-h^+,m^--h^-)$ on~${g_0\circ\borw(L\otimes\RR)\cong\RR^{b-1,1}}$, by
	\be\label{eq;bordefimplpol}
	\pol\big(g_0\circ g(\genvec)\big)=\sum_{h^+,h^-}(\genvec,\genU_{z^\perp})^{h^+}\cdot(\genvec,\genU_z)^{h^-}\cdot\polw{\borw}{h^+}{h^-}\big(g_0\circ\borw(\genvec)\big).
	\ee
	\end{defi}

	In Section~\ref{sec;follBorcherds} we will rewrite the Kudla--Millson theta form in terms of Siegel theta functions associated to the homogeneous polynomials~$\polab$ defined on~$\RR^{b,2}$ as in~\eqref{eq;comppolPab}, namely
	\bes
	\polab\big((x_1,\dots,x_{b+2})^t\big)=2x_\alpha x_\beta,
	\ees
	where the indices~$\alpha$ and~$\beta$ are in~$\{1,\dots,b\}$.
	Since the polynomials~$\polab$ are homogeneous of degree $(2,0)$, we may simplify~\eqref{eq;bordefimplpol} as
	\be\label{eq;decpolborh-0}
	\polab\big(g_0\circ g(\genvec)\big)=\sum_{h^+=0}^2(\genvec,\genU_{z^\perp})^{h^+}\cdot\polw{(\alpha,\beta),\borw}{h^+}{0}\big(g_0\circ\borw(\genvec)\big).
	\ee
	
	The following result provides a formula to compute~$\polw{(\alpha,\beta),\borw}{h^+}{0}$.
	\begin{lemma}\label{lemma;rewritborchimpldecourpol}
	For every~$g\in G$, the polynomial~$\polw{(\alpha,\beta),\borw}{h^+}{0}$ arising from the decomposition~\eqref{eq;decpolborh-0} of~$\polab$ may be computed as
	\ba\label{eq;closefforborpor}
	&\polw{(\alpha,\beta),\borw}{h^+}{0}\big(g_0\circ\borw(\genvec)\big)
	\\
	&\qquad
	=\begin{cases}
	\frac{2}{\genU_{z^\perp}^4} \big(g(\genU),\basevec_\alpha\big) \big(g(\genU),\basevec_\beta\big), & \text{if $h^+=2$,}\\
	\frac{2}{\genU_{z^\perp}^2} \big(g(\genU),\basevec_\alpha\big) \big(\borw(\genvec),\basevec_\beta\big) +\frac{2}{\genU_{z^\perp}^2} \big(g(\genU),\basevec_\beta\big) \big(\borw(\genvec),\basevec_\alpha\big), & \text{if $h^+=1$,}\\
	2 \big(\borw(\genvec),\basevec_\alpha\big)\big(\borw(\genvec),\basevec_\beta\big), & \text{if $h^+=0$,}
	\end{cases}
	\ea
	where~$z=g^{-1}(z_0)\in\Gr(L)$.
	\end{lemma}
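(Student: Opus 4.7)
The approach is a direct but careful expansion: rewrite $\polab\bigl(g_0\circ g(\genvec)\bigr)$ as a product of two inner products on $V$, decompose each factor according to the orthogonal splitting $V=\RR\genU_{z^\perp}\oplus w^\perp\oplus\RR\genU_z\oplus w$, and then read off the coefficients of $(\genvec,\genU_{z^\perp})^{h^+}$ by comparison with~\eqref{eq;decpolborh-0}.

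The first step is to unpack the left-hand side of~\eqref{eq;closefforborpor}. Since $g_0$ sends the fixed orthogonal basis $(\basevec_j)_j$ to the standard basis of $\RR^{b,2}$ and since $\alpha,\beta\in\{1,\dots,b\}$, the $\alpha$-th and $\beta$-th coordinates of $g_0\circ g(\genvec)$ equal $\bigl(g(\genvec),\basevec_\alpha\bigr)$ and $\bigl(g(\genvec),\basevec_\beta\bigr)$ respectively, so that $\polab\bigl(g_0\circ g(\genvec)\bigr)=2\bigl(g(\genvec),\basevec_\alpha\bigr)\bigl(g(\genvec),\basevec_\beta\bigr)$.

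The second step is to decompose $\genvec$ along $V=\RR\genU_{z^\perp}\oplus w^\perp\oplus\RR\genU_z\oplus w$ and apply $g$, which yields
\bes
g(\genvec)=\frac{(\genvec,\genU_{z^\perp})}{\genU_{z^\perp}^2}\,g(\genU_{z^\perp})+\frac{(\genvec,\genU_z)}{\genU_z^2}\,g(\genU_z)+\borw(\genvec),
\ees
using $\borw(\genvec)=g(\genvec_{w^\perp}+\genvec_w)$ from Definition~\ref{defi;not&deffromborw}. The key observation that makes the statement work is that $g$ maps $z$ onto $z_0=\mathrm{span}(\basevec_{b+1},\basevec_{b+2})$, so $g(\genU_z)\in z_0$ is orthogonal to every $\basevec_\alpha$ with $\alpha\le b$. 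This both kills the middle term upon pairing with $\basevec_\alpha$ and forces $\bigl(g(\genU_{z^\perp}),\basevec_\alpha\bigr)=\bigl(g(\genU),\basevec_\alpha\bigr)$, whence
\bes
\bigl(g(\genvec),\basevec_\alpha\bigr)=\frac{(\genvec,\genU_{z^\perp})}{\genU_{z^\perp}^2}\bigl(g(\genU),\basevec_\alpha\bigr)+\bigl(\borw(\genvec),\basevec_\alpha\bigr),
\ees
and analogously with $\beta$ in place of $\alpha$.

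The last step is to multiply these two expressions, collect powers of $(\genvec,\genU_{z^\perp})$, and match the quadratic, linear and constant terms with~\eqref{eq;decpolborh-0}; the three resulting coefficients reproduce the three cases of~\eqref{eq;closefforborpor} verbatim. The only conceptual input beyond linear algebra is the orthogonality $g(\genU_z)\perp\basevec_\alpha$ just highlighted, which is precisely what lets $g(\genU)$ (rather than its projection $g(\genU_{z^\perp})$) appear in the stated formula and what makes the $(\genvec,\genU_z)$-dependence drop out entirely; the remainder is routine bookkeeping, so no genuine obstacle is expected.
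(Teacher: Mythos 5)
Your proof is correct and is essentially the paper's argument in adjoint form: the paper expands $\polab\bigl(g_0\circ g(\genvec)\bigr)=2\bigl(\genvec,g^{-1}(\basevec_\alpha)\bigr)\bigl(\genvec,g^{-1}(\basevec_\beta)\bigr)$ and decomposes $g^{-1}(\basevec_j)=s_j\genU_{z^\perp}+\genvec_j'$ along $z^\perp=\RR\genU_{z^\perp}\oplus w^\perp$, whereas you decompose $\genvec$ itself and keep $g$ on the other side of the pairing — the same computation by the isometry property of $g$, using the same key orthogonality facts and the same matching of powers of $(\genvec,\genU_{z^\perp})$.
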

	\begin{proof}
	For every $\genvec\in L\otimes\RR$, we denote by $x_j$ the coordinate of $\genvec$ with respect to the standard basis $\basevec_1,\dots,\basevec_{b+2}$ of $L\otimes\RR$.
	We recall that
	\bes
	\polab\big(g_0(\genvec)\big)=2x_\alpha x_\beta = 2(\genvec,\basevec_\alpha)(\genvec,\basevec_\beta).
	\ees
	If $g\in G=\GG(L\otimes\RR)$, then $\polab\big( g_0\circ g(\genvec)\big)=2\big(\genvec,g^{-1}(\basevec_\alpha)\big)\big(\genvec,g^{-1}(\basevec_\beta)\big)$.
	To rewrite the latter polynomial as in~\eqref{eq;decpolborh-0}, we rewrite $\big(\genvec,g^{-1}(\basevec_j)\big)$ in terms of $(\genvec,\genU_{z^\perp})$, for $j=\alpha,\beta$.
	
	The negative definite plane $z=g^{-1}(z_0)$ is generated by $g^{-1}(\basevec_{b+1})$ and $g^{-1}(\basevec_{b+2})$, whereas the positive definite $b$-dimensional subspace $z^\perp$ is generated by $g^{-1}(\basevec_1),\dots,g^{-1}(\basevec_b)$.
	Hence, the vectors $g^{-1}(\basevec_\alpha)$ and $g^{-1}(\basevec_\beta)$ lie in $z^\perp$.
	Recall that $w$ (resp.\ $w^\perp$) is the orthogonal complement of $u_z$ (resp.\ $u_{z^\perp}$) in $z$ (resp.\ $z^\perp$).
	We may decompose
	\be\label{eq;replprimstdbas}
	g^{-1}(\basevec_j)=s_j u_{z^\perp} + v_j',
	\ee
	for some $s_j\in\RR$, where $v_j'$ is the orthogonal projection of $g^{-1}(\basevec_j)$ to $w^\perp$ and~$j=\alpha,\beta$.
	
	By virtue of~\eqref{eq;replprimstdbas} we may rewrite~$\polab\big( g_0\circ g(\genvec)\big)$ as
	\be\label{eq;decompolabourcase}
	\polab\big( g_0\circ g(\genvec)\big)=2(\genvec,u_{z^\perp})^2s_\alpha s_\beta + (\genvec,u_{z^\perp})\big(
	2s_\alpha (\genvec,\genvec_\beta') + 2s_\beta (\genvec,\genvec_\alpha')\big)
	+ 2(\genvec,\genvec_\alpha')(\genvec,\genvec_\beta').
	\ee
	Comparing~\eqref{eq;decompolabourcase} with~\eqref{eq;decpolborh-0}, we deduce that
	\bes
	\polw{(\alpha,\beta),\borw}{h^+}{0}\big(g_0\circ\borw(\genvec)\big)=\begin{cases}
	2s_\alpha s_\beta, & \text{if $h^+=2$,}\\
	2s_\alpha (\genvec,\genvec_\beta') + 2s_\beta (\genvec,\genvec_\alpha'), & \text{if $h^+=1$,}\\
	2(\genvec,\genvec_\alpha')(\genvec,\genvec_\beta'), & \text{if $h^+=0$.}
	\end{cases}
	\ees
	Since $u_{z^\perp}$ is orthogonal to $w^\perp$, it follows that
	\bes
	s_j=\frac{\big(\genU_{z^\perp},g^{-1}(\basevec_j)\big)}{\genU_{z^\perp}^2}=
	\frac{\big(g(\genU),\basevec_j\big)}{\genU_{z^\perp}^2}.
	\ees
	Moreover, since~$\basevec_j$ is orthogonal to~$g(\genvec_w)$ for every~$j\le b$, we may rewrite
	\bes
	(\genvec,\genvec_j')=\big(\genvec_{w^\perp},g^{-1}(\basevec_j)\big)=\big(\borw(\genvec),\basevec_j\big).\qedhere
	\ees
	\end{proof}
	The following result illustrates how to decompose the Siegel theta function~$\Theta_L$ attached to the polynomial~$\polab$ with respect to the splitting~${L=\brK\oplus U}$ chosen in~\eqref{eq;splitofL}.
	It is~\cite[Theorem~5.2]{bo;grass}, rewritten with respect to our unimodular lattice~$L$.
	
	\begin{thm}[Borcherds]\label{thm;fromborchspltheta}
	Let $L=\brK\oplus U$ be a unimodular lattice of signature~$(b,2)$, and let~$\mu\in(\brK\otimes\RR)\oplus\RR u$ be the vector defined as
	\bes
	\mu=-\genUU+\genU_{z^\perp}/2\genU_{z^\perp}^2+\genU_z/2\genU_z^2.
	\ees
	We have
	\ba\label{eq;formbtfforpolab}
	&\Theta_L(\tau, g,\polab)
	\\
	&\quad=\frac{1}{\sqrt{2 y \genU_{z^\perp}^2}}\Theta_{\brK}(\tau,\borw,\polw{(\alpha,\beta),\borw}{0}{0})
	+ \frac{1}{\sqrt{2y\genU_{z^\perp}^2}}
	\sum_{\substack{c,d\in\ZZ\\ \gcd(c,d)=1}}
	\sum_{r\ge1}
	\sum_{h^+=0}^2
	\Big(-\frac{r}{2iy}\Big)^{h^+}
	\\
	&\quad\quad\times (c\bar{\tau}+d)^{h^+}e\Big(
	-\frac{r^2|c\tau+d|^2}{4iy\genU_{z^\perp}^2}
	\Big)\Theta_{\brK}\big(\tau,rd\mu,-rc\mu,\borw,\polw{(\alpha,\beta),\borw}{h^+}{0}\big).
	\ea
	\end{thm}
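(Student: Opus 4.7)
The plan is to follow Borcherds' strategy for Theorem~5.2 of \cite{bo;grass}, specialised to our unimodular lattice $L$ of signature $(b,2)$ and to the explicit polynomial $\polab$. Concretely, the proof proceeds by writing each $\lambda\in L$ using the splitting $L=\brK\oplus\ZZ u\oplus\ZZ u'$ as $\lambda=\kappa+mu+nu'$ with $\kappa\in\brK$ and $m,n\in\ZZ$, expanding the polynomial factor via Lemma~\ref{lemma;rewritborchimpldecourpol}, and applying Poisson summation in the variable dual to the $u'$-coordinate. The zero Fourier coefficient produces the first summand on the right-hand side of~\eqref{eq;formbtfforpolab}, while the non-zero ones, organised by the greatest common divisor $r=\gcd(c,d)$ of the Poisson indices, yield the double series in the theorem.

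First I would substitute the splitting of $\lambda$ into the summand of $\Theta_L(\tau,g,\polab)$ in Definition~\ref{def;deg1genSiegth}. The projections $\lambda_z$ and $\lambda_{z^\perp}$ can be expressed in terms of $\kappa_w$, $\kappa_{w^\perp}$, $m$, $n$, and the projections $u_z, u_{z^\perp}, u'_z, u'_{z^\perp}$; this separates the quadratic form in the exponential $e\bigl(\tau\,q(\lambda_{z^\perp})+\bar\tau\,q(\lambda_z)\bigr)$ into a $\brK$-part depending on $\kappa$ and a purely $U$-part involving $m$ and $n$. The polynomial factor $\exp(-\Delta/8\pi y)(\polab)\bigl(g_0\circ g(\lambda)\bigr)$ is handled by applying Lemma~\ref{lemma;rewritborchimpldecourpol} to decompose $\polab\bigl(g_0\circ g(\lambda)\bigr)$ into the pieces $(\lambda,u_{z^\perp})^{h^+}\polw{(\alpha,\beta),\borw}{h^+}{0}\bigl(g_0\circ\borw(\lambda)\bigr)$ for $h^+=0,1,2$; the operator $\exp(-\Delta/8\pi y)$ then turns each $(\lambda,u_{z^\perp})^{h^+}$ into a polynomial in $(\lambda,u_{z^\perp})/\sqrt{y}$ of the same degree, while leaving the $\borw$-part untouched (it is harmonic in the $\brK$-variables).

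Next I would apply Poisson summation to the sum over $n\in\ZZ$, after completing the square in the exponent to make it a Gaussian in $n$ times a phase linear in $n$. The resulting Gaussian integral transform produces the prefactor $1/\sqrt{2y u_{z^\perp}^2}$ and the Gaussian $e\bigl(-r^2|c\tau+d|^2/(4iy u_{z^\perp}^2)\bigr)$, with $(c,d)$ ranging over $\ZZ^2$; each extra factor of $(\lambda,u_{z^\perp})$ in the polynomial produces, through the Gaussian moment formula, a factor of the form $-(c\bar\tau+d)/(2iy)$ times $r$. After isolating the contribution of $(c,d)=(0,0)$, which reproduces the first term $\Theta_{\brK}(\tau,\borw,\polw{(\alpha,\beta),\borw}{0}{0})/\sqrt{2y u_{z^\perp}^2}$, one reindexes the remaining pairs $(c,d)\neq(0,0)$ as $r(c,d)$ with $\gcd(c,d)=1$ and $r\geq 1$, which gives the triple sum in~\eqref{eq;formbtfforpolab}. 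The insertion of the vectors $rd\mu$ and $-rc\mu$ in $\Theta_{\brK}$ accounts for the phase linear in $n$ that arose after completing the square, using the specific choice of $\mu$ stated in the theorem.

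The main obstacle is the bookkeeping in the third step: one must track exactly how each power $(\lambda,u_{z^\perp})^{h^+}$ transforms under Poisson summation against the Gaussian weight, and verify that the resulting prefactor is precisely $\bigl(-r/(2iy)\bigr)^{h^+}(c\bar\tau+d)^{h^+}$ without any spurious correction from the $\exp(-\Delta/8\pi y)$ operator. Once this moment computation is carried out carefully and the $(c,d)\neq(0,0)$ contributions are regrouped by their gcd, the formula~\eqref{eq;formbtfforpolab} follows. Since everything beyond these specific insertions is contained in Borcherds' Theorem~5.2, the argument amounts to verifying that Lemma~\ref{lemma;rewritborchimpldecourpol} furnishes the correct polynomial ingredients and that the unimodularity of $L$ collapses the sum over cosets to the single term appearing here.
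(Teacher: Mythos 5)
The paper offers no independent proof of this statement: it is quoted as Borcherds' Theorem~5.2, specialized to a unimodular lattice of signature $(b,2)$ and to polynomials of degree $(2,0)$, so your proposal --- which reconstructs Borcherds' Poisson-summation argument in the $\genUU$-coordinate and then reindexes the nonzero pairs $(c,d)$ by their greatest common divisor $r$ --- is essentially the same approach, just carried out one level deeper into the cited source. The only slip is the parenthetical claim that the polynomials $\polw{(\alpha,\beta),\borw}{h^+}{0}$ are harmonic (they need not be, e.g.\ when $\alpha=\beta$); this is harmless, since $\exp(-\Delta/8\pi y)$ factors along the orthogonal decomposition $\RR\genU_{z^\perp}\oplus\RR\genU_z\oplus w^\perp\oplus w$ and its $\brK$-component is already built into the definition of $\Theta_{\brK}$ in Definition~\ref{def;deg1genSiegth}.
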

	\begin{rem}\label{rem;abofnotfrombrmu}
	In Theorem~\ref{thm;fromborchspltheta} we should write~$\mu_{\brK}$ as argument of~$\Theta_{\brK}$, namely the orthogonal projection of~$\mu$ to~$\brK\otimes\RR$, instead of~$\mu$.
	However, since~${\mu_{\brK}=\mu-(\mu,\genUU)\genU}$, we have
	\bas
	\mu_w&=(\mu_{\brK})_w=-\genUU_w,\\
	\mu_{w^\perp}&=(\mu_{\brK})_{w^\perp}=-\genUU_{w^\perp},\\
	(\mu,\genU)&=(\mu_{\brK},\genU).
	\eas
	This explain why we may use such an abuse of notation.
	Note also that the orthogonal projection~${L\otimes\RR\to\brK\otimes\RR}$ induces an isometric isomorphism~$w^\perp\oplus w\to w_{\Lor}^\perp\oplus w_{\Lor}=\brK\otimes\RR$.
	This implies that we may identify~$w$ with~$w_\Lor$ and consider~$w$ as an element of~$\Gr(\brK)$; see~\cite[p.~$42$]{br;borchp}.
	Analogously, we may regard~$\borw|_{\brK\otimes\RR}$ as an element of~$\GG(\brK\otimes\RR)$.
	\end{rem}
	
	%%%%%%%%%%%%%%%%%%%%%%
	\subsection{The Kudla--Millson theta form in terms of Siegel theta functions.}\label{sec;follBorcherds}
	In this section we explain how to rewrite the Kudla--Millson theta form~$\Theta(\tau,z,\varphi_{\text{\rm KM}})$ in terms of certain Siegel theta functions~$\Theta_L$.
	We then rewrite the latter with respect to a splitting~$L=\brK\oplus U$, for some Lorentzian lattice~$\brK$ and some hyperbolic plane~$U$.\\
	
%	We recall that the base point~$z_0$ of~$\Gr(L)$ is defined as the negative definite plane in~$V$ generated by~$\basevec_{b+1}$ and~$\basevec_{b+2}$, or equivalently $z_0=g_0^{-1}(\RR^{0,2})$, considering $\RR^{0,2}$ as a quadratic subspace of $\RR^{b,2}$.
%	Therefore the orthogonal complement~${z_0^\perp=g_0^{-1}(\RR^{b,0})}$ is the span of~${\basevec_1,\dots,\basevec_b}$ in~$L\otimes\RR$.
	
	At the beginning of Section~\ref{sec;KMthetagen1} we rewrote the Kudla--Millson theta form as
	\ba\label{eq;KMdeg1recallexbis}
	\Theta(\tau,z,\varphi_{\text{\rm KM}})=\sum_{\alpha,\beta=1}^b \FFab(\tau,g) \cdot g^*(\omega_{\alpha,b+1}\wedge\omega_{\beta,b+2}),
	\ea
	in terms of certain auxiliary functions~$\FFab$ arising from the Schrödinger model~$\omega_\infty$ applied to	 the polynomial~$\Gpol_{(\alpha,\beta)}$ multiplied with the standard Gaussian~$\varphi_0$; see~\eqref{eq;KMdeg1recallexpl}.
	
	We are now ready to compute the auxiliary functions~$\FFab$ in terms of Siegel theta functions.
	\begin{lemma}\label{lemma;FabwithBorch}
	For every index~$\alpha,\beta=1,\dots,b$, we may rewrite the auxiliary function~$\FFab$ in terms of Siegel theta functions as
	\be\label{eq;FabwithBorch}
	\FFab(\tau,g)=
	y\cdot\Theta_L(\tau,g,\polab),
	\ee
	where~$\tau=x+iy\in\HH$,~$g\in G$, and~$\polab$ is the homogeneous polynomial defined in~\eqref{eq;comppolPab}.
	\end{lemma}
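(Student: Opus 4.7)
The plan is to compute the Schr\"odinger-model action of~$\omega_\infty(g_\tau)$ on the Schwartz function~$\polab\varphi_0$ explicitly, and to match the resulting lattice sum with the Siegel theta function~$\Theta_L(\tau,g,\polab)$ of Definition~\ref{def;deg1genSiegth}.

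I would factor $g_\tau$ as the product of the unipotent $n(x)$ and the split-Cartan $m(\sqrt y)$, and apply the standard Weil-representation formulas $\omega_\infty(m(\sqrt y))f(v)=y^{k/2}f(\sqrt y\,v)$ and $\omega_\infty(n(x))f(v)=e(x\,q(v))f(v)$ to $f(v)=\polab(g_0 g(v))\,\varphi_0(g_0 g(v))$. The rescaling from $m(\sqrt y)$ turns the Gaussian $e^{-\pi\|g_0 g(v)\|^2}$ into $e^{-\pi y\|g_0 g(v)\|^2}$ and, by the degree-$2$ homogeneity of~$\polab$, also pulls out an additional factor of~$y$. The subsequent multiplication by $e^{2\pi i x\,q(v)}$ combines with the rescaled Gaussian via the majorant identity $\|g_0 g(v)\|^2=2q(v_{z^\perp})-2q(v_z)$: the exponent reorganises as $2\pi i\tau q(v_{z^\perp})+2\pi i\bar\tau q(v_z)$, producing the Siegel factor $e\bigl(\tau q(v_{z^\perp})+\bar\tau q(v_z)\bigr)$.

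Collecting scalars, $\omega_\infty(g_\tau)(\polab\varphi_0)(g_0 g(\lambda))$ is then $y^{k/2+1}\polab(g_0 g(\lambda))$ times this Siegel exponential. Multiplying by $y^{-k/2}$ and summing over~$\lambda\in L$ leaves exactly the leading factor~$y$ predicted by~\eqref{eq;FabwithBorch}.

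The main obstacle is reconciling the polynomial produced by the direct Weil-representation computation with the operator~$\exp(-\Delta/8\pi y)$ appearing in Definition~\ref{def;deg1genSiegth}. For $\alpha\neq\beta$ the polynomial~$\polab$ is harmonic (Remark~\ref{rem;harmpol}), so $\exp(-\Delta/8\pi y)$ acts as the identity and the match with~$\Theta_L(\tau,g,\polab)$ is immediate. The delicate case is $\alpha=\beta$, where $\polab$ is non-harmonic and the $\exp(-\Delta/8\pi y)$ correction is non-trivial; here the required equality rests on the standard interplay, inside the Weil representation, between the Schr\"odinger action on a polynomial-times-Gaussian Schwartz function and the harmonic-projection operator~$\exp(-\Delta/8\pi y)$. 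This is the step I expect to require the most careful bookkeeping.
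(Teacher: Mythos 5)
Your proposal is correct and follows essentially the same route as the paper's proof: factor $g_\tau$ into its unipotent and diagonal parts, apply the Schr\"odinger-model formulas, combine the phase $e\big(xq(\genvec)\big)$ with the rescaled Gaussian into $e\big(\tau q(\genvec_{z^\perp})+\bar{\tau}q(\genvec_z)\big)$, and invoke the harmonicity of~$\polab$ for $\alpha\neq\beta$. The one step you leave open, the case $\alpha=\beta$, is settled in the paper by the single identity $\Gpol_{(\alpha,\alpha)}\big(g_0\circ g(y^{1/2}\genvec)\big)=y\cdot\exp\big(-\Delta/8\pi y\big)(\pol_{(\alpha,\alpha)})\big(g_0\circ g(\genvec)\big)$, which is exactly the ``standard interplay'' between the rescaling and the operator $\exp(-\Delta/8\pi y)$ that you anticipate.
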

	\begin{proof}
	Suppose that~$\alpha\neq\beta$.
	Let $g_\tau=\big(\begin{smallmatrix} 1 & x\\ 0 & 1 \end{smallmatrix}\big)\left(\begin{smallmatrix} \sqrt{y} & 0\\ 0 & \sqrt{y}^{-1}\end{smallmatrix}\right)$ be the standard element of~$\SL_2(\RR)$ mapping~$i$ to~$\tau=x+iy$.
	Recall the Schrödinger model~$\omega_\infty$ of the Weil representation from Definition~\ref{defi;schrmod}.
	Since the polynomial~$\Gpol_{(\alpha,\beta)}=\polab$ is homogeneous of degree~$(2,0)$ on~$\RR^{b,2}$, we may compute
	\begin{align}\label{eq;inizFabwithBorch}
	\begin{split}\omega_\infty(g_\tau)\big(\polab\varphi_0\big)&\big(g_0\circ g(\genvec)\big)=y^{k/2}\cdot\omega_\infty\left(\begin{smallmatrix} 1 & x\\ 0 & 1\end{smallmatrix}\right)\big(\polab \varphi_0\big)\big(g_0\circ g(y^{1/2}\genvec)\big)\\
	&=y^{k/2} \cdot e\big(xq(\genvec)\big)\cdot\big(\polab\varphi_0\big)\big(g_0\circ g(y^{1/2}\genvec)\big)
	%\nonumber
	\\
	&=y^{1+k/2}\cdot e\big(xq(\genvec)\big)\cdot\polab\big(g_0\circ g(\genvec)\big)\cdot \varphi_0\big(g_0\circ g(y^{1/2}\genvec)\big).%\nonumber
	\end{split}
	\end{align}
	Since~$\varphi_0\big(g_0\circ g(y^{1/2}\genvec)\big)=e^{-\pi y(\genvec,\genvec)_z}$, where~$z=g^{-1}(z_0)\in\Gr(L)$, we may deduce that
	\bas
	e\big(x q(\genvec)\big)\cdot\varphi_0\big(g_0\circ g(y^{1/2}\genvec)\big)
	=
	e\big(x q(\genvec)\big)\cdot e^{-\pi y(\genvec,\genvec)_z}
	=
	e\big(\tau q(\genvec_{z^\perp})+\bar{\tau}q(\genvec_z)\big),
	\eas
	for every $\tau\in\HH$.
	This, together with~\eqref{eq;inizFabwithBorch}, implies that
	\bes
	\omega_\infty(g_\tau)\big(\polab\varphi_0\big)\big(g_0\circ g(\genvec)\big)=
	y^{1+k/2}\cdot\polab\big(g_0\circ g(\genvec)\big)\cdot e\big(\tau q(\genvec_{z^\perp})+\bar{\tau}q(\genvec_z)\big),
	\ees
	which we may insert into the formula defining~$\FFab$ in~\eqref{eq;KMdeg1recallexpl}, obtaining that
	\bes
	\FFab(\tau,g)=y\cdot\sum_{\lambda\in L}\polab\big(g_0\circ g(\lambda)\big)\cdot e\Big(
	\tau q(\lambda_{z^\perp})+\bar{\tau}q(\lambda_z)
	\Big).
	\ees
	It is enough to compare this with~\eqref{eq;bormegagentheta}, to deduce~\eqref{eq;FabwithBorch}.
	In fact, the polynomial~$\polab$ is harmonic; see Remark~\ref{rem;harmpol}.
	
	The case~$\alpha=\beta$ is analogous.
	The only difference is that
	\bes
	\Gpol_{(\alpha,\alpha)}(g_0\circ g(y^{1/2}\genvec)\big)=y\cdot\exp\big(-\Delta/8\pi y\big)(\pol_{(\alpha,\alpha)})\big(g_0\circ g(\genvec)\big).\qedhere
	\ees
	\end{proof}
%	
%	Recall that we fixed~$k=1+b/2$ once and for all.
%	\begin{cor}\label{cor;SL2ZZinvofyfovF}
%	Let $g\in G$, and let~$f\in S^k_1$.
%	The function~$y^k f(\tau) \overline{\FFab(\tau,g)}$ on~$\HH$ is~$\SL_2(\ZZ)$-invariant, for every~$\alpha,\beta=1,\dots,b$.
%	In particular, the integral
%	\bes
%	\int_{\SL_2(\ZZ)\backslash\HH}y^k f(\tau)\overline{\FFab(\tau,g)}\frac{dx\,dy}{y^2}
%	\ees
%	is well-defined, and may be computed over any fundamental domain of $\HH$ with respect to the action of $\SL_2(\ZZ)$.
%	\end{cor}
%	\begin{proof}
%	Let $\gamma=\left(\begin{smallmatrix} a & b\\ c & d\end{smallmatrix}\right)\in\SL_2(\ZZ)$.
%	By Lemma~\ref{lemma;FabwithBorch} and Theorem~\ref{thm;borchmodtransftheta4}, we deduce that
%	\bas
%	\Im (\gamma\cdot\tau)^k f(\gamma\cdot \tau) \overline{\FFab(\gamma\cdot\tau,g)}=
%	\Im (\gamma\cdot \tau)^{k+1}f(\gamma\cdot \tau) \overline{\Theta_L(\gamma\cdot\tau,g,\polab)}=\hspace{3cm}\\
%	=\frac{\Im (\tau)^{k+1}}{|c\tau+d|^{2k+2}}(c\tau+d)^{k+1}(c\bar{\tau}+d)^{k+1} f(\tau)\overline{\Theta_L(\tau,g,\polab)}=\Im(\tau)^kf(\tau)\overline{\FFab(\tau,g)},
%	\eas
%	for every $\tau\in\HH$.
%	\end{proof}
	We conclude this section illustrating how to rewrite~$\FFab$ with respect to the splitting~${L=\brK\oplus U}$.
	\begin{cor}\label{cor;fromborchspltheta}
	For every~$\alpha,\beta=1,\dots,b$, we may rewrite the auxiliary function $\FFab(\tau,g)$ with respect to the splitting $L=\brK\oplus U$ as
	\ba\label{eq;splitFabviabor}
	\FFab(\tau,g)&=\frac{\sqrt{y}}{\sqrt{2 \genU_{z^\perp}^2}}\Theta_{\brK}(\tau,\borw,\polw{(\alpha,\beta),\borw}{0}{0}) +
	\frac{\sqrt{y}}{\sqrt{2\genU_{z^\perp}^2}}\sum_{\substack{c,d\in\ZZ\\ \gcd(c,d)=1}}
	\sum_{r\ge1}
	\sum_{h^+=0}^2
	\Big(-\frac{r}{2iy}\Big)^{h^+}\\
	&\quad\times (c\bar{\tau}+d)^{h^+}\cdot \exp\Big(
	-\frac{\pi r^2|c\tau+d|^2}{2y\genU_{z^\perp}^2}
	\Big)\cdot\Theta_{\brK}\big(\tau,rd\mu,-rc\mu,\borw,\polw{(\alpha,\beta),\borw}{h^+}{0}\big).
	\ea
	\end{cor}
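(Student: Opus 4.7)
The plan is to combine the two results already established in this section: Lemma~\ref{lemma;FabwithBorch}, which identifies $\FFab(\tau,g)$ with $y\cdot\Theta_L(\tau,g,\polab)$, and Theorem~\ref{thm;fromborchspltheta} (Borcherds), which decomposes $\Theta_L(\tau,g,\polab)$ along the splitting $L=\brK\oplus U$. This corollary is essentially a bookkeeping consequence of stitching these together, so the proof should be very short.

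More precisely, I would first invoke Lemma~\ref{lemma;FabwithBorch} to write
\[
\FFab(\tau,g)=y\cdot\Theta_L(\tau,g,\polab),
\]
and then substitute the expression for $\Theta_L(\tau,g,\polab)$ given by Theorem~\ref{thm;fromborchspltheta}. The prefactor $y$ combines with the $\frac{1}{\sqrt{2y\,\genU_{z^\perp}^2}}$ appearing in both pieces of Borcherds' formula to produce the $\frac{\sqrt{y}}{\sqrt{2\genU_{z^\perp}^2}}$ in~\eqref{eq;splitFabviabor}.

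The only non-purely-formal point is to check that the exponential factor matches. In Theorem~\ref{thm;fromborchspltheta} the exponential appears in the form
\[
e\Big(-\frac{r^2|c\tau+d|^2}{4iy\,\genU_{z^\perp}^2}\Big),
\]
where $e(\,\cdot\,)=\exp(2\pi i\,\cdot\,)$. Rewriting this as an ordinary exponential gives exactly
\[
\exp\Big(-\frac{\pi r^2|c\tau+d|^2}{2y\,\genU_{z^\perp}^2}\Big),
\]
which is the factor appearing in~\eqref{eq;splitFabviabor}. After this rewriting, the two formulas coincide term by term.

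There is no real obstacle here; the main thing to be careful about is the normalisation conversion between $e(\cdot)$ and $\exp(\cdot)$ in the exponential, and the arithmetic of combining the factor $y$ with $1/\sqrt{2y\,\genU_{z^\perp}^2}$. Everything else, including the sum over coprime $(c,d)$, the sum over $r\ge 1$, the sum over $h^+\in\{0,1,2\}$, and the polynomials $\polw{(\alpha,\beta),\borw}{h^+}{0}$ (which are explicit by Lemma~\ref{lemma;rewritborchimpldecourpol}), carries over from Theorem~\ref{thm;fromborchspltheta} without modification.
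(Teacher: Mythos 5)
Your proposal is correct and follows exactly the paper's route: the paper's proof is the one-line observation that the corollary is a direct consequence of Lemma~\ref{lemma;FabwithBorch} and Theorem~\ref{thm;fromborchspltheta}. Your additional checks (the factor $y\cdot(2y\genU_{z^\perp}^2)^{-1/2}=\sqrt{y}\,(2\genU_{z^\perp}^2)^{-1/2}$ and the conversion $e\big(-r^2|c\tau+d|^2/4iy\genU_{z^\perp}^2\big)=\exp\big(-\pi r^2|c\tau+d|^2/2y\genU_{z^\perp}^2\big)$) are both accurate and simply make explicit what the paper leaves implicit.
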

%	\textcolor{Cyan}{\textbf{If~$L$ is not unimodular:}
%	We have that
%	\bas
%	\FFab&(\tau,g)=
%	\sum_{\substack{h\in L'/L\\ \text{$(\nuni,\genU)\equiv 0$ mod $N$}}}
%	\frac{\sqrt{y}}{\sqrt{2 \genU_{z^\perp}^2}}\theta_{\brK+h}(\tau,\borw,\polw{(\alpha,\beta),\borw}{0}{0})\mathfrak{e}_h
%	+\\
%	+& \frac{\sqrt{y}}{\sqrt{2\genU_{z^\perp}^2}}
%	\underset{\substack{c,d\in\ZZ\\ \gcd(c,d)=1}}{{\sum}'}
%	\sum_{r\ge1}
%	\sum_{\substack{h\in L'/L\\ \text{$(h,\genU)\equiv rc$ mod $N$}}}
%	\sum_{h^+=0}^2(-2iy)^{-h^+} \cdot r^{h^+} \cdot (c\bar{\tau}+d)^{h^+}\times\\
%	\times &  e\Big(
%	-\frac{r^2|c\tau+d|^2}{4iy\genU_{z^\perp}^2} - r(h,\genUU)d + \frac{r^2(\genUU,\genUU)cd}{2}
%	\Big)\cdot\theta_{\brK+p(h-rc\genUU)}\big(\tau,rd\mu,-rc\mu,\borw,\polw{(\alpha,\beta),\borw}{h^+}{0}\big)\mathfrak{e}_h.
%	\eas
%	Here, we used that
%	\bes
%	\sum_{h\in L'/L}\underset{\substack{c,d\in\ZZ\\ \text{$c\equiv (h,\genU)$ mod $N$}}}{{\sum}'}
%	=
%	\underset{c,d\in\ZZ}{{\sum}'} \sum_{\substack{h\in L'/L\\ \text{$(h,\genU)\equiv c$ mod $N$}}}.
%	\ees
%	}
	\begin{proof}
	It is a direct consequence of Lemma~\ref{lemma;FabwithBorch} and Theorem~\ref{thm;fromborchspltheta}.
	\end{proof}
		
	%%%%%%%%%%%%%%%%%%%
	\section{Fourier expansions of complex-valued functions on~$\GG(L\otimes\RR)$}\label{sec;FexpLlorinvfuncts}
	
	In this section we recall two different models of~$\Gr(L)$, namely the \emph{projective model}~$\projmod$ in~$\PP(L\otimes\CC)$, and the \emph{tube domain model}~$\tubedom$ in~$\brK\otimes\CC$.
	We then explain how to identify the group of isometries~$G=\GG(L\otimes\RR)$ with the product~$K\times\tubedom$, where~$K$ is the stabilizer in~$G$ of the base point~$z_0\in\Gr(L)$.
	Furthermore, we illustrate how to use such an identification to construct Fourier expansions of complex-valued functions defined on~$G$ which are invariant with respect to translations by elements of~$\brK$.
	This will be relevant in Section~\ref{sec;unftr}, where we compute Fourier expansions of certain~$\brK$-invariant functions arising from a decomposition of the Kudla--Millson theta lift; see Theorem~\ref{thm;Fourexpofcoef}.
	\textcolor{\myblack}{Since the results appearing in this section are known to experts, we provide here only a quick overview}.
	
	Recall that we denote by~$L$ an even unimodular lattice of signature~$(b,2)$, by~$(\basevec_j)_j$ the standard basis of $L\otimes\RR$, and by~$\genU$ and~$\genUU$ the isotropic lattice vectors defined as in~\eqref{eq;choiceofuu'}.
	The lattice~$\brK$ is the Lorentzian sublattice of~$L$ orthogonal to the hyperbolic lattice~$\ZZ\genU\oplus\ZZ\genUU$.
	
	\subsection{Models of the symmetric space associated to~$\boldsymbol{G}$}\label{sec;modelsofsp}
	We denote by $\mathcal{D}_b$ the~$b$-dimensional complex manifold
	\bes
	\mathcal{D}_b=\big\{[Z_L]\in\PP(L\otimes\CC)\,:\,\text{$(Z_L,Z_L)=0$ and $(Z_L,\overline{Z_L})<0$}\big\}.
	\ees
	It has two connected components.
	We choose the one containing $[Z_L^0]$, where $Z_L^0\coloneqq[\basevec_{b+1}+i\basevec_{b+2}]$, and denote it by~$\projmod$.
	Such a component is identified with~$\Gr(L)$ \textcolor{\myblack}{as illustrated in~\cite[Part~II, Lemma~2.17]{1-2-3}}, explaining why $\projmod$ is usually referred as the \emph{projective model} of~$\Gr(L)$.
	\textcolor{\myblack}{The idea is to associated to~$z\in\Gr(L)$ an element~$[Z_L]\in\projmod$ with~$Z_L=X_L+iY_L$ by choosing a suitable basis~$X_L$,~$Y_L$ of the negative definite plane~$z$.}
%	For every~$[Z_L]\in\projmod$, the decomposition in real and imaginary part of the representative~$Z_L=X_L + i Y_L$ is such that
%	\be\label{eq;reimpartZL}
%	X_L\perp Y_L\qquad\text{and}\qquad X_L^2=Y_L^2<0.
%	\ee
%	This implies that the plane~$z=\langle X_L,Y_L\rangle_\RR$ in~$L\otimes\RR$ is negative definite, or equivalently, it is an element of~$\Gr(L)$.
%	Clearly, the construction of~$z$ above does not depend on the choice of the representative of~$[Z_L]$.
%	Conversely, if~$z\in\Gr(L)$, then we may choose a basis~$X_L,Y_L$ satisfying~\eqref{eq;reimpartZL} and such that $[X_L+iY_L]\in\projmod$.	
%%	Recall that the base point $z_0\in\Gr(L)$ is the negative definite plane in $L\otimes\RR$ generated by $\basevec_{b+1}$ and $\basevec_{b+2}$.
%%	Clearly~$z_0$ maps to~$[Z_L^0]\in\projmod$ via the previous identification.
%	\\
	
	We now recall the tube domain model of $\Gr(L)$.
	If $Z_L\in L\otimes\CC$, then $Z_L=Z+au'+bu$ for some $Z\in\brK\otimes\CC$ and some~$a,b\in\CC$.
	We write~$Z_L=(Z,a,b)$ in short.
	The \emph{tube domain model}~$\tubedom$ is defined as the connected component of
	\bes
	\big\{Z=X+iY\in\brK\otimes\CC\,:\,Y^2<0\big\}
	\ees
	mapping to~$\projmod$ under the \textcolor{\myblack}{biholomorphism}
	\bes
	\tubedom \longrightarrow \projmod,\qquad Z\longmapsto[Z_L]=[(Z,1,-q(Z))].
	\ees
%	Such a bijection is biholomorphic.
%	In fact, since $(Z_L,\genU)\neq 0$ for every $[Z_L]\in\projmod$, it is possible to choose
	\textcolor{\myblack}{We remark that for fixed~$[Z_L]\in\projmod$ there is a unique representative~$Z_L=(Z,1,-q(Z))$ such that~$Z=X+iY\in\tubedom$ and
	\be\label{eq;choiceforXlYl}
	X_L=(X,1,q(Y)-q(X))\qquad\text{and}\qquad Y_L=(Y,0,-(X,Y)).
	\ee	
	That representative depends on the choice of the isotropic vectors~$\genU$ and~$\genUU$.}
	The representative of the form $(Z_0,1,-q(Z_0))$ of the base point in~$\projmod$ is the one such that $Z_0=X_0+iY_0$, with~$X_0=0$ and~${Y_0=\sqrt{2}\basevec_{b+1}}$.
	
	We identified $\Gr(L)$ with $\projmod$ and $\tubedom$.
	The base point~${z_0=\langle \basevec_{b+1},\basevec_{b+2} \rangle}$ of~$\Gr(L)$ maps under such identifications respectively to
	\bas
	z_0\longleftrightarrow [Z_L^0]=[-\sqrt{2}\basevec_{b+2}+i\sqrt{2}\basevec_{b+1}] & \longleftrightarrow Z_0=i\sqrt{2}\basevec_{b+1}.
	\eas
	
	The following result can be regarded as a dictionary to rewrite functions defined on one of the previous models as functions on the remaining ones.
	It will be useful in Section~\ref{sec;compFexpafterunfold}, where we rewrite certain theta integrals in terms of the tube domain model~$\tubedom$.
	\begin{lemma}\label{lemma;someformufortubdom}
	Let $w$ (resp.\ $w^\perp$) be the orthogonal complement of $\genU_z$ (resp.\ $\genU_{z^\perp}$) in~$z$ (resp.~$z^\perp$), and let $\mu=-\genUU+\genU_{z^\perp}/2\genU_{z^\perp}^2+\genU_z/2\genU_z^2$.
	If $Z=X+iY\in\tubedom$ corresponds to~$z$ under the previous identifications, and if the representative of the corresponding point~${[Z_L]=[X_L+iY_L]}$ in $\projmod$ is chosen such that~\eqref{eq;choiceforXlYl} is satisfied, then
	\ba\label{eq;someformufortubdom}
	\begin{split}
	X_L^2 & = Y_L^2 = Y^2\\
	\genU_{z^\perp}^2 & =-\genU_{z}^2=-1/Y^2,\\
		{\lambda}_w & =(\lambda,Y)Y/Y^2,
	\end{split}
	\qquad
	\begin{split}
	\genU_z & = X_L/Y^2\\
	\mu_{\brK} & =X,\\
		(\lambda,\lambda)_w &%=\brlam^2-2(\brlam,{\brlam}_w)
	=\lambda^2-2(\lambda,Y)^2/Y^2,
	\end{split}
	\ea
	where~$\lambda$ is any vector of~$\brK\otimes\RR$, and~$\mu_{\brK}$ is the orthogonal projection of~$\mu$ to~$\brK\otimes\RR$.
	\end{lemma}
	\begin{proof}
	See e.g.~\cite[p.~$543$]{bo;grass} and~\cite[pp.~$79, 80$]{br;borchp}.
%	Pay attention that the lattice~$L$ in this paper has signature~$(b,2)$, and not~$(2,b)$ as in the cited references.
	\end{proof}
	
	\subsection{The identification of~$\boldsymbol{K\times\tubedom}$ with~$\boldsymbol{G}$}\label{sec;theidentof}
	Let $z\in\Gr(L)$, and let $Z=X+iY\in\tubedom$ and $[Z_L]\in\projmod$ be the corresponding points in the tube domain model and in the projective model, respectively.
	From now on we suppose that $Z_L=X_L+iY_L$ is the only representative of~$[Z_L]$ such that~\eqref{eq;choiceforXlYl} is fulfilled.
%	Recall that we denote by~$K$ the compact maximal subgroup of~$G$ that stabilizes the base point~$z_0\in\Gr(L)$.
	
	We want to fix \emph{once and for all} an identification of $K\times\tubedom$ with~$G$, i.e.\ a diffeomorphism
	\be\label{eq;finident}
	\iden \colon K\times\tubedom \longrightarrow G.
	\ee
%	The number of such possible identifications is infinite.
%	In fact, for fixed~$z\in\Gr(L)$ there are infinitely many isometries of~$G$ mapping~$z$ to~$z_0$, since if~$g$ is one of them, then so is $\kappa\cdot g$, for every $\kappa\in K$.
	For the purposes of this article, we need to choose an identification~$\iden$ fulfilling the properties illustrated in the following result.
	The reason, which will become clear with Theorem~\ref{thm;Fourexpofcoef}, is that we need to use such properties to prove that some series, arising from the Kudla--Millson lift, are actually Fourier expansions of complex-valued functions defined on~$G$.
%	Let~$g\in G$ and let~$z= g^{-1}(z_0)\in\Gr(L)$.
	\textcolor{\myblack}{Recall the construction of~$w$,~$w^\perp$ and~$\borw$ from Definition~\ref{defi;not&deffromborw}.}
%	that we denote by~$w$ (resp.~$w^\perp$) the orthogonal complement of~$\genU_z$ (resp.~$\genU_{z^\perp}$) in~$z$ (resp.~$z^\perp$), and that we denote by~$\borw\colon L\otimes\RR\to L\otimes\RR$ the linear map defined as
%	\bes
%	\borw(\genvec)\coloneqq g(\genvec_{w^\perp} + \genvec_w).
%	\ees
	
	\begin{lemma}\label{lemma;choosegoodiden}
	There exists a diffeomorphism~$\iden\colon K\times\tubedom\to G$ such that
	\bas
	&\iden(\kappa,Z)=\kappa\cdot\iden(1,Z),\qquad	&&\iden(1,Z)\colon z\longmapsto z_0,\qquad\text{and} &&&\iden(1,Z)\colon \RR\genU\longmapsto \RR\genU,\\
	\eas
	and also such that the associated function~$\borwiden{1}{Z}|_{\brK\otimes\RR}$ does not depend on the real part of~$Z$, or equivalently~\textcolor{\myblack}{$\borwiden{1}{Z}(\genvec)=\borwiden{1}{Z+X'}(\genvec)$ for every~$\genvec,X'\in\brK\otimes\RR$.}
	\end{lemma}
%	\begin{rem}\label{rem;mapsi1Zdeduce}
%	The fact that the identification~$\iden$ of Lemma~\ref{lemma;choosegoodiden} is such that the isometry~$\iden(1,Z)$ maps~$z$ to~$z_0$ and preserves the isotropic line~$\RR\genU$ implies that
%	\ba\label{eq;choosegoodidensendprojandw}
%	&\iden(1,Z)\colon \RR \genU_z\longmapsto \RR \genU_{z_0},\qquad &&\iden(1,Z)\colon \RR \genU_{z^\perp}\longmapsto \RR \genU_{z_0^\perp},\\
%	&\iden(1,Z)\colon w\longmapsto w_0,\qquad &&\iden(1,Z)\colon w^\perp\longmapsto w_0^\perp.
%	\ea
%	In fact, since~$\iden(1,Z)(\genU)=c\cdot\genU$ for some~$c\in\RR$, it follows that
%	\bes
%	\iden(1,Z)(\genU_z)=\iden(1,Z)(\genU)_{\iden(1,Z)(z)}=c\cdot \genU_{z_0},
%	\ees
%	hence the line~$\RR \genU_z$ maps to~$\RR \genU_{z_0}$.
%	The remaining results appearing in~\eqref{eq;choosegoodidensendprojandw} can be deduced analogously.
%	\end{rem}
%	
%	
%	
%	\subsection{An explicit identification of~$\boldsymbol{K\times\tubedom}$ with $\boldsymbol{G}$}\label{sec;explidenKHbG}
%	
%	In this section we provide an example of an identification~$\iden\colon K\times\tubedom\to G$ satisfying the properties illustrated in Lemma~\ref{lemma;choosegoodiden}.
	\textcolor{\myblack}{To construct such a~$\iden$ we use the following explicit Iwasawa decomposition of~$G$.}
	We choose a basis of~$L\otimes\RR$ which differs both from the orthonormal one used to construct the Kudla--Millson Schwartz function, that we denoted by~$(\basevec_j)_j$, and from the one used in~\cite[Section~$4.1$]{br;borchp} to give coordinates of~$\tubedom$.
	The reason is that the new basis enables us to rewrite the factors~$A$ and $N$ of an Iwasawa decomposition~$G=KAN$ as groups of matrices with an easy description, namely as diagonal matrices for the former, and upper triangular for the latter.
	
	The new basis we choose is the one given by
	\be\label{eq;newbasisKAN}
	\genU,\genUtwo,d_3,\dots,d_b,\genUUtwo,\genUU,
	\ee
	where $d_j\coloneqq \basevec_{j-2}$ for $3\le j\le b$, while~\textcolor{\myblack}{$\genUtwo\coloneqq (\basevec_{b-1} + \basevec_{b+1})/\sqrt{2}$ and~$\genUUtwo\coloneqq(\basevec_{b-1} - \basevec_{b+1})/\sqrt{2}$} are the standard generators of the hyperbolic plane~$U$ split off orthogonally by~$\brK$, such that~$\brK=\Lpos\oplus U$ for some positive definite unimodular lattice~$\Lpos$.
%	
%	In this section we write every~$\genvec\in L\otimes\RR$ with respect to the basis above as
%	\bes
%	\genvec=\genvec_1 \genU + \genvec_2\genUtwo + \sum_{j=3}^b\genvec_j d_j + \genvec_{b+1}\genUUtwo + \genvec_{b+2}\genUU,
%	\ees
%	or in short as a column vector in~$\RR^{b+2}$ with the~$\genvec_j$'s as entries.
%	With this notation, we may represent the quadratic form of~$L\otimes\RR$ as
%	\bes
%	q(\genvec)=\genvec_1\genvec_{b+2} + \genvec_2\genvec_{b+1} + \frac{1}{2}\sum_{j=3}^{b}\genvec_j^2.
%	\ees
	
	As illustrated e.g.\ in~\cite[Section~$5.1$]{ohmorris} and~\cite[Section~$2.3$]{Livinsky}, we may realize the Iwasawa decomposition of~$G=\GG(L\otimes\RR)$ over the basis~\eqref{eq;newbasisKAN} as~$G=KAN$, where~$K$ is the stabilizer of the base point~$z_0=\langle \genU-\genUU,\genUtwo-\genUUtwo\rangle_\RR$, which is the same we chose in the previous sections,~\textcolor{\myblack}{$A=\big\{\diag(m_1,m_2,1,\dots,1,m_2^{-1},m_1^{-1}) : m_1,m_2\in\RR_{>0} \big\}$} is a group of diagonal matrices with non-negative entries, and~$N$
%	\bes
%	N=\left\{
%	\left(\begin{smallmatrix}
%	1 & \phi & x+\frac{1}{2}\phi y & \eta -\frac{1}{2}x y^t - \frac{1}{6}\phi yy^t & -\phi\eta -\frac{1}{2}xx^t + \frac{1}{24}\phi^2 yy^t\\
%	 & 1 & y & -\frac{1}{2}yy^t & -\eta - \frac{1}{2}xy^t + \frac{1}{6}\phi yy^t\\
%	  & & \Id & -y^t & -x^t + \frac{1}{2}\phi y^t\\
%	  & & & 1 & -\phi\\
%	  & & & & 1
%	\end{smallmatrix}\right) : \eta,\phi\in\RR\text{ and } x,y\in\RR^{b-2}
%	\right\}
%	\ees
	is \textcolor{\myblack}{the group of upper triangular matrices as in~\cite[First paragraph of Section~$5.2$]{ohmorris}}.
	
	If~$Z=X+iY\in\brK\otimes\CC$, we may rewrite it with respect to the basis~\eqref{eq;newbasisKAN} as the column vector~$Z=(0,Z_2,\dots,Z_{b+1},0)^t$, for some~$Z_j\in\CC$, and analogously for the real and imaginary part of~$Z$.
	We may rewrite the tube domain model~$\tubedom$ with respect to the new basis~\eqref{eq;newbasisKAN} as
	\bes
	\tubedom=\{ Z\in\brK\otimes\CC : q(Y)<0\text{ and }Y_{b+1}<0 \}.
	\ees
%	The action of~$G$ on the projective model~$\projmod$ is obtained by restricting the one of~$G$ on~$\PP(L\otimes\CC)$.
	\textcolor{\myblack}{One can easily use the induced action of~$G$ on~$\tubedom$ to prove the following result.}
	\begin{lemma}\label{lemma;transltubedom}
	Let~$X'\in\brK\otimes\RR$, and let~$M(X')\in N$ be the matrix defined as
	\bes
	M(X')=\left(\begin{smallmatrix}
	1 & -X'_{b+1} & -X'_3 & \cdots & -X'_b & -X'_2 & -q(X')\\
	  & 1 & 0 & \cdots & \cdots & 0 & X'_2\\
	  &   & \ddots & \ddots & & \vdots & X'_3\\
	  & & & \ddots & \ddots & \vdots & \vdots \\
	  & & & & \ddots & 0 & X'_b\\
	  & & & & & 1 & X'_{b+1}\\
	  & & & & & & 1
	\end{smallmatrix}\right),
	\ees
	where we denote by~$X'_j\in\RR$ the $j$-th coordinate of~$X'$ with respect to the basis~\eqref{eq;newbasisKAN}.
	The action of~$M(X')$ on~$\tubedom$ is given by the translation~$Z\mapsto Z+X'$.
	\end{lemma}
%	\begin{proof}
%	If we rewrite~$Z$ over the basis~\eqref{eq;newbasisKAN} as~$Z=(0,Z_2,\dots,Z_{b+1},0)^t$, its corresponding point in the projective model~$\projmod$ is~$[Z_L]$, where~$Z_L=(-q(Z),Z_2,\dots,Z_{b+1},1)^t$.
%	We may then rewrite the translation on~$\tubedom$ given by
%	\bes
%	Z=(0,Z_2,\dots,Z_{b+1},0)^t \longmapsto Z+X'=(0,Z_2+X_2',\dots,Z_{b+1}+X_{b+1}',0)^t
%	\ees
%	on the projective model as
%	\bes
%	[(-q(Z),Z_2,\dots,Z_{b+1},1)^t] \longmapsto [(-q(Z+X'),Z_2+X_2',\dots,Z_{b+1}+X_{b+1}',1)^t].
%	\ees
%	Such map is the one induced via multiplication by~$M(X')$, since
%	\bes
%	-q(Z+X') = -q(Z) -q(X') - Z_2 X_{b+1}' - Z_{b+1} X'_2 - \frac{1}{2}\sum_{j=3}^b Z_j X_j'. \qedhere
%	\ees
%	\end{proof}
%	
%	The base point~$z_0\in\Gr(L)$ corresponds to~$[Z^0_L]\in\projmod$ in the projective model, and to~$Z_0\in\tubedom$ in the tube domain model.
%	The representative~$Z_L^0=X_L^0+iY_L^0$ may be written over the basis~\eqref{eq;newbasisKAN} with
%	\bes
%	X_L^0=(-1,0,\dots,0,1)\qquad \text{and}\qquad Y_L^0=(0,1,0,\dots,0,-1,0),
%	\ees
%	while~$Z_0=X_0+iY_0$ with
%	\bes
%	X_0=0\qquad\text{and}\qquad Y_0=(1,0,\dots,0,-1).
%	\ees
	
	We recall that~$AN$ acts on~$\Gr(L)$ bijectively, that is, for every~$z\in\Gr(L)$ there exists only one~$a\in A$ and~$n\in N$ such that~$an$ maps~$z_0$ to~$z$.
%	We use this property to provide an identification~$\iden\colon K\times\tubedom\to G$ as in Section~\ref{sec;theidentof}, and then we prove that it satisfies the properties illustrated in Lemma~\ref{lemma;choosegoodiden}.
	\begin{defi}
	Let~$G=KAN$ be the Iwasawa decomposition of~$G=\GG(L\otimes\RR)$ constructed above.
	If~$Z\in\tubedom$ corresponds to the negative definite plane~$z\in\Gr(L)$, then we define~$\iden(1,Z)\coloneqq (an)^{-1}$, where~$a\in A$ and~$n\in N$ are chosen such that~$an$ maps~$z_0$ to~$z$. 
	We also set~$\iden(\kappa,Z)=\kappa\cdot\iden(1,Z)$, for every~$\kappa\in K$.
	\end{defi}
	\textcolor{\myblack}{It is easy to check that the~$\iota$ above satisfies the properties illustrated in Lemma~\ref{lemma;choosegoodiden}.}

	\subsection{Fourier expansions}\label{Fexp}
%	We fix once and for all a diffeomorphism~$\iden$ identifying~$K\times\tubedom$ with~$G$.
	In this section we introduce Fourier expansions of $\brK$-invariant complex valued functions defined over~$G$.
	
	Recall that the sublattice~$\brK$ is unimodular.
	If a smooth function $F\colon \tubedom\to\CC$ is~$\brK$-invariant function, i.e.~$F(Z+\lambda)=F(Z)$ for every~$\lambda\in\brK$, then it admits a Fourier expansion of the form
	\bes
	F(Z)=\sum_{\lambda\in\brK}c(\lambda,Y)\cdot e\big((\lambda,X)\big),
	\ees
	where we denote by~$c(\lambda,Y)$ the Fourier coefficient of~$F$ associated to~$\lambda$ and~$Y$.

	It is possible to consider Fourier expansions of $\brK$-invariant functions defined over~$G$ instead of $\tubedom$, as we are going to illustrate.
	
	If $F\colon G\to\CC$ is a smooth function defined over~$G$, we may use an identification~$\iota$ as in Section~\ref{sec;theidentof} to rewrite~$F$ as a function of the form~$F\colon K\times\tubedom\to\CC$, which we denote with the same symbol.
	Suppose that $F$ is $\brK$-invariant, i.e.
	\bes
	F(\kappa,Z+\lambda)=F(\kappa,Z)\qquad\text{for every $\kappa\in K$, $Z\in\tubedom$ and $\lambda\in\brK$,}
	\ees
	then $F$ admits a Fourier expansion
	\be\label{eq;FexpgenfunctG}
	F(g)=F(\kappa,Z)=
	\sum_{\lambda\in\brK}c(\lambda,\kappa,Y)\cdot e\big((\lambda,X)\big),
	\ee
	where $g\in G$ is identified with $(\kappa,Z)\in\tubedom\times K$ under~$\iota$.
	The functions~$c(\lambda,\kappa,Y)$ are the \emph{Fourier coefficients (with respect to~$\iden$)} of~$F$.

	%%%%%%%%%%%%%%%%%%%
	\section{The unfolding of the Kudla--Millson lift}\label{sec;theunfofKMgen1}
	
	In this section we explain how to compute the Kudla--Millson lift of a cusp form~$f$ in terms of integrals of~$f$ against certain Siegel theta functions.
	We then unfold such integrals and deduce their Fourier expansions as complex-valued functions defined on~$G$.
	
	Recall that~$k=1+b/2$.
	The genus~$1$ Kudla--Millson lift~${\KMliftbase\colon S^k_1\to\mathcal{Z}^2(\hermdom)}$ has already been introduced with Definition~\ref{def;KMliftgen1}.
	It produces~$\Gamma$-invariant~$2$-forms on~$\hermdom$ which descend to~$2$-forms on the orthogonal Shimura variety~$X_\Gamma=\Gamma\backslash\hermdom$, for every arithmetic subgroup~$\Gamma$ of~$\GG(L)$.
	
	By Lemma~\ref{lemma;FabwithBorch} and~\eqref{eq;KMdeg1recallexpl}, we may rewrite the Kudla--Millson lift of a cusp form~$f\in S^k_1$ in terms of Siegel theta functions as
	\be\label{eq;KMlambdaaO}
	\KMliftbase(f)=\sum_{\alpha,\beta=1}^{b}\Big(
	\underbrace{\int_{\SL_2(\ZZ)\backslash\HH}y^{k+1}f(\tau)\overline{\Theta_L(\tau,g,\polab)}\,\frac{dx\,dy}{y^2}}_{\eqqcolon\intfunct(g)}
	\Big)
	\cdot
	g^*\big(\omega_{\alpha, b+1}\wedge\omega_{\beta, b+2}\big),
	\ee
	for every~$g\in G$ mapping~$z$ to~$z_0$.
	The closed $2$-form~$\KMliftbase(f)$ at the point~$z\in\hermdom$ does not depend on the choice of such~$g$.
	
	We refer to the integrals~$\intfunct$ appearing in~\eqref{eq;KMlambdaaO} as the \emph{defining integrals} of~$\KMliftbase(f)$.
	They are complex-valued functions defined over~$G$.
	The goal of this section is to compute a Fourier expansion of such defining integrals~$\intfunct$ by means of the \emph{unfolding trick}.

	\subsection{The unfolding of $\boldsymbol{\KMliftbase}$}\label{sec;unftr}
	We aim to unfold the defining integrals~$\intfunct$ appearing in~\eqref{eq;KMlambdaaO} of the Kudla--Millson lift by means of the \textcolor{\myblack}{Rankin--Selberg method}.
	To do so, we follow the same strategy of Borcherds~\cite[Section~$5$]{bo;grass}, rewriting the Siegel theta functions with respect to a split~$L=\brK\oplus U$.
	Recall the polynomials~$\polw{(\alpha,\beta),\borw}{h^+}{0}$ from Lemma~\ref{lemma;rewritborchimpldecourpol}\textcolor{\myblack}{, and let~$\Gamma_\infty$ be the index~$2$ subgroup~$\{\left(\begin{smallmatrix}
	1 & n\\ 0 & 1
	\end{smallmatrix}\right) : n\in\ZZ\}$ of the group of translations in~$\SL_2(\ZZ)$}.
	\begin{prop}\label{prop;compauxhab}
	Let~$\alpha,\beta=1,\dots,b$ and let~$f\in S^k_1$. Denote by~$\hhab$ the~$\Gamma_\infty$-invariant auxiliary function defined as
	\bas
	\hhab(\tau,g)&=\frac{y^{k+1/2}f(\tau)}{\sqrt{2}\,|\genU_{z^\perp}|}\sum_{r\ge1} \sum_{h^+=0}^2
	\Big(\frac{r}{2iy}\Big)^{h^+}
	\exp\Big(
	-\frac{\pi r^2}{2y\genU_{z^\perp}^2}
	\Big)
	\\
	&\quad\times\overline{\Theta_{\brK}\big(\tau,r\mu,0,\borw,\polw{(\alpha,\beta),\borw}{h^+}{0}\big)},
	\eas
	for every~$\tau\in\HH$ and~$g\in G$, where~$z=g^{-1}(z_0)\in\Gr(L)$.
	The integrands appearing in the defining integrals~$\intfunct$ of the lift~$\KMliftbase(f)$ may be rewritten as
	\ba\label{eq;unfoldingtrickform}
	y^{k+1}f(\tau)\,\overline{\Theta_L(\tau,g,\polab)}
	&=
	\frac{y^{k+1/2}f(\tau)}{\sqrt{2}\,|\genU_{z^\perp}|}\,\overline{\Theta_{\brK}(\tau,\borw,\polw{(\alpha,\beta),\borw}{0}{0})}
	\\
	&\quad+
	\sum_{\gamma=\left(\begin{smallmatrix}
	* & *\\ c & d
	\end{smallmatrix}\right)\in\Gamma_\infty\backslash\SL_2(\ZZ)}\hhab(\gamma\cdot\tau,g).
	\ea
	\end{prop}
	\begin{proof}
	The definition of $\hhab$ above corresponds to the product of~$y^kf(\tau)$ with the conjugate of the summand in~\eqref{eq;splitFabviabor} associated to the values~$c=0$ and~$d=1$.
	Such a function is~$\Gamma_\infty$-invariant, since so is also~$\Theta_{\brK}(\tau,r\mu,0,\borw,\pol_{(\alpha,\beta),\borw,h^+,h^-})$.
	
	Let $\gamma=\big(\begin{smallmatrix}
	a & b\\ c & d
	\end{smallmatrix}\big)\in\Gamma_\infty\backslash\SL_2(\ZZ)$, for some coprime integers $c,d\in\ZZ$, and let $g\in G$.
	By the modular transformation properties of $y$, $f(\tau)$ and $\Theta_{\brK}$, where the automorphic factor of the latter is given by Theorem~\ref{thm;borchmodtransftheta4}, we deduce that
	\ba\label{eq;proofposunftr}
	\hhab(\gamma\cdot\tau,g)&=\frac{1}{\sqrt{2}\,|\genU_{z^\perp}|}\cdot \frac{y^{k+1/2}}{|c\tau+d|^{2k+1}}
	(c\tau+d)^k
	f(\tau)
	\\
	&\quad \times \sum_{h^+=0}^2 |c\tau+d|^{2h^+} \sum_{r\ge1}\Big(\frac{r}{2iy}\Big)^{h^+}
	\exp\Big(
	-\frac{\pi r^2|c\tau+d|^2}{2y\genU_{z^\perp}^2}
	\Big)
	\\
	&\quad \times (c\bar{\tau}+d)^{(b-1)/2+2-h^+}(c\tau+d)^{1/2}\,\overline{\Theta_{\brK}(\tau,M,N,\borw,\polw{(\alpha,\beta),\borw}{h^+}{0})},
	\ea
	where $M,N\in\brK\otimes\RR$ are such that $aM+bN=r\mu$ and $cM+dN=0$.
	The solutions of the previous system of equations are $M=rd\mu$ and $N=-rc\mu$.
	We replace them in~\eqref{eq;proofposunftr}, and simplify the factors given by powers of~$(c\tau+d)$ and their conjugates, deducing that
	\bas
	\hhab(\gamma\cdot\tau,g)&=\frac{y^{k+1/2}}{\sqrt{2}\,|\genU_{z^\perp}|}
	f(\tau)
	\sum_{r\ge1}
	\sum_{h^+=0}^2
	\Big(\frac{r}{2iy}\Big)^{h^+} (c\tau+d)^{h^+}
	\\
	& \quad\times
	\exp\Big(
	-\frac{\pi r^2|c\tau+d|^2}{2y\genU_{z^\perp}^2}
	\Big)\,\overline{\Theta_{\brK}(\tau,rd\mu,-rc\mu,\borw,\polw{(\alpha,\beta),\borw}{h^+}{0})}.
	\eas
	From this and Corollary~\ref{cor;fromborchspltheta} we deduce that the value~$\hhab(\gamma\cdot\tau,g)$ equals the~$(c,d)$-summand of~${y^kf(\tau)\,\overline{\FFab(\tau,g)}}$ arising when rewriting $\FFab(\tau,g)$ as in~\eqref{eq;splitFabviabor}.
	This concludes the proof of~\eqref{eq;unfoldingtrickform}.
	\end{proof}
	\begin{cor}\label{cor;unfolded!}
	Let~$f\in S^k_1$.
	We may unfold the defining integrals~$\intfunct$ of the Kudla--Millson lift~$\KMliftbase(f)$ as
	\ba\label{eq;firstunftrickap}
%	\int_{\SL_2(\ZZ)\backslash\HH}y^{k+1} f(\tau)\,\overline{\Theta_L(\tau,g,\polab)}\frac{dx\,dy}{y^2}=\hspace{6.5cm}\\
	\intfunct(g)
	&=
	\int_{\SL_2(\ZZ)\backslash\HH}\frac{y^{k+1/2}f(\tau)}{\sqrt{2}\,|\genU_{z^\perp}|}\,\overline{\Theta_{\brK}(\tau,\borw,\polw{(\alpha,\beta),\borw}{0}{0})}\frac{dx\,dy}{y^2}
	\\
	&\quad + 2\int_{\Gamma_\infty\backslash\HH}\hhab(\tau,g)\frac{dx\,dy}{y^2},
	\ea
	where~$\hhab$ is the auxiliary function provided by Proposition~\ref{prop;compauxhab}.
	\end{cor}
	\begin{proof}
	It is enough to apply the unfolding trick to the integral over~$\SL_2(\ZZ)\backslash\HH$ of the right-hand side of~\eqref{eq;unfoldingtrickform}.
	\end{proof}

	\subsection{Fourier expansions of unfolded integrals}\label{sec;compFexpafterunfold}	
	In this section we compute the Fourier expansion of the defining integrals~$\intfunct\colon G\to\CC$ of~$\KMliftbase(f)$ appearing in~\eqref{eq;KMlambdaaO}, for every~$f\in S^k_1$.
	To do so, we will replace in the unfolded integrals provided by Corollary~\ref{cor;unfolded!} the cusp form~$f$ with its Fourier expansion, and the Siegel theta function~$\Theta_{\brK}$ with its defining series.
	We write the Fourier expansion of~$f$ as
	\be\label{eq;fexpf}
	f(\tau)=\sum_{n>0}c_n(f)e(n\tau)=\sum_{n>0}c_n(f)\exp(-2\pi ny)e(nx),
	\ee
	where~$\tau= x + iy$.
	
	Recall that we denote by~$(\cdot{,}\cdot)_w$ the \emph{standard majorant} with respect to~${w\in\Gr(\brK)}$, that is~$(\genvec,\genvec)_w=\genvec_{w^\perp}^2 - \genvec_w^2$, for every~$\genvec\in \brK\otimes\RR$.
	We rewrite
	\ba\label{eq;precFexpofTllor}
	\Theta_{\brK}\big(\tau,r\mu,0,\borw,\polw{(\alpha,\beta),\borw}{h^+}{0}\big)
	&=
	\sum_{\lambda\in\brK}\exp(-\Delta/8\pi y)\big(\polw{(\alpha,\beta),\borw}{h^+}{0}\big)\big(g_0\circ\borw(\lambda)\big)
	\\
	&\quad\times \exp\big(-\pi y(\lambda,\lambda)_w\big)\cdot e\big(xq(\lambda)\big)\cdot e\big(-r(\lambda,\mu)\big),
	\ea
	with respect to the decomposition of~$\tau$ in real and imaginary part.
	\begin{rem}
	Even if $\polab$ is harmonic, namely if~$\alpha\neq\beta$, the polynomials~$\polw{(\alpha,\beta),\borw}{h^+}{0}$ may not be harmonic.
	If $h^+=1,2$, then they are of degree respectively~$0$ and~$1$, so they are harmonic.
	But the harmonicity of the one associated to~$h^+=0$ depends on the choice of~$g$, as illustrated in the following example.
	This explains why the operator~$\exp(-\Delta/8\pi y)$ appearing in~\eqref{eq;precFexpofTllor} can not be in general dropped, even under the hypothesis that~$\alpha\neq\beta$.
	\end{rem}
	\begin{ex}\label{ex;nonharmh+=2}
	We are going to construct an isometry~${g\in G=\GG(L\otimes\RR)}$ such that the polynomial~$\polw{(\alpha,\beta),\borw}{0}{0}$ is non-harmonic.
	
	Suppose that~$\alpha\neq\beta$ and that~$\alpha,\beta<b$.
	Let $g\in G$ be the isometry defined as
	\bes
	g\colon \basevec_\alpha\mapsto\frac{\basevec_\alpha+\basevec_\beta}{\sqrt{2}},\qquad \basevec_b\mapsto\frac{\basevec_\alpha-\basevec_\beta}{\sqrt{2}},\qquad \basevec_\beta\mapsto \basevec_b,
	\ees
	and fixing all remaining vectors of the standard basis of~$L\otimes\RR$.
	We remark that such an isometry lies in the maximal compact subgroup~$K$ of~$G$, that is, in the stabilizer of the base point~$z_0\in\Gr(L)$.
	
	Recall that~$\polab\big(g_0(\genvec)\big)=2x_\alpha x_\beta$, for every $\genvec=\sum_{j=1}^{b+2}x_j \basevec_j\in L\otimes\RR$.
	For the special choice of the isometry~$g$ as above, we may deduce that
	\be\label{eq;exnonharmh+0}
	\polab\big(g_0\circ g(\genvec)\big)=x_\alpha^2-x_b^2,
	\ee
	since
	\bes
	g(\genvec)=x_1\basevec_1+\dots+\Big(\frac{x_\alpha+x_b}{\sqrt{2}}\Big)\basevec_\alpha + \dots + \Big(\frac{x_\alpha-x_b}{\sqrt{2}}\Big)\basevec_\beta +\dots+x_\beta \basevec_b +\dots + x_{b+2}\basevec_{b+2}.
	\ees
	
	We are now ready to compute the polynomials~$\polw{(\alpha,\beta),\borw}{h^+}{0}$.
	Since~$u=(\basevec_b+\basevec_{b+2})/\sqrt{2}$, we deduce that~${u_{z_0^\perp}=\basevec_b/\sqrt{2}}$, hence~${(\genvec,u_{z_0^\perp})=x_b/\sqrt{2}}$.
%	Moreover, we have that $\borw(L\otimes\RR)=g(\brK\otimes\RR)$.
	By comparing~\eqref{eq;exnonharmh+0} with~\eqref{eq;decpolborh-0}, or directly by Lemma~\ref{lemma;rewritborchimpldecourpol}, we deduce that
	\bes
	\polw{(\alpha,\beta),\borw}{h^+}{0}\big(g_0\circ \borw(\genvec)\big)=\begin{cases}
	x_\alpha^2, & \text{if $h^+=0$,}\\
	0, & \text{if $h^+=1$,}\\
	-2, & \text{if $h^+=2$.}
	\end{cases}
	\ees
	In particular, the polynomial $\polw{(\alpha,\beta),\borw}{0}{0}$ is non-harmonic.
	\end{ex}
	
	We are now ready to prove the main result of this section.
	Recall the defining integrals~$\intfunct$ of the Kudla--Millson lift of a cusp form from~\eqref{eq;KMlambdaaO}.
	\begin{thm}\label{thm;Fourexpofcoef}
	Let~$f\in S^k_1$.
	We identify~$G$ with~$K\times\tubedom$ under a diffeomorphism~$\iden$ as in Lemma~\ref{lemma;choosegoodiden}, such that every $g\in G$ may be rewritten as~$\iden(\kappa,Z)$, for a unique~$(\kappa,Z)\in K\times\tubedom$.
	The defining integrals~$\intfunct\colon G\to\CC$ of the Kudla--Millson lift~$\KMliftbase(f)$ have a Fourier expansion of the form
	\be\label{eq;foucoefcor0}
	\intfunct(g)=\intfunct\big(\iden(\kappa,Z)\big)=\sum_{\lambda\in\brK}c(\lambda,\kappa,Y)\cdot e\big((\lambda,X)\big)	,
	\ee
	where we decompose~$Z=X+iY\in\tubedom$.
	
	The Fourier coefficient of~$\intfunct$ associated to any~$\lambda\in\brK$ with~$q(\lambda)>0$ is
	\ba\label{eq;foucoefcor}
	c(\lambda,\kappa,Y)&=\frac{\sqrt{2}}{|\genU_{z^\perp}|}
	\sum_{h^+=0}^2
	\sum_{\substack{t\ge 1\\ t|\lambda}}
	\Big(\frac{t}{2i}\Big)^{h^+}
	c_{q(\lambda)/t^2}(f)
	\int_0^{+\infty} y^{k-h^+-3/2}
	\\
	&\quad\times\exp\Big(-\frac{2\pi y \lambda_{w^\perp}^2}{t^2} -
	\frac{\pi t^2}{2y\genU_{z^\perp}^2}
	\Big)\cdot\exp(-\Delta/8\pi y)\big(\polw{(\alpha,\beta),\borw}{h^+}{0}\big)\big(g_0\circ\borw(\lambda/t)\big)dy,
	\ea
	where we say that an integer $t\ge1$ divides $\lambda\in\brK$, in short~$t|\lambda$, if and only if~$\lambda/t$ is still a lattice vector in $\brK$.
	
	The Fourier coefficient of~$\intfunct$ associated to~$\lambda=0$, i.e. the constant term of the Fourier expansion, is
	\ba\label{eq;intconstcoef}
	c(0,\kappa,Y)=\int_{\SL_2(\ZZ)\backslash\HH}\frac{y^{k+1/2}f(\tau)}{\sqrt{2}\, |\genU_{z^\perp}|}
	\,
	\overline{\Theta_{\brK}(\tau,\borw,\polw{(\alpha,\beta),\borw}{0}{0})}
	\,
	\frac{dx\,dy}{y^2}.
	\ea
	
	In all remaining cases, the Fourier coefficients are trivial.
	\end{thm}
	Implicit in~\eqref{eq;foucoefcor} and~\eqref{eq;intconstcoef} is that the right-hand sides do not depend on~$X$.
	This is shown in the proof of Theorem~\ref{thm;Fourexpofcoef} using the following result.
	\begin{lemma}\label{lemma;nodependenceonXgen1}
	Let~$\pol$ be a homogeneous polynomial of degree~$(m^+,m^-)$ on~$\RR^{b,2}$.
	We identify~$G$ with~$K\times\tubedom$ under a diffeomorphism~$\iden$ as in Lemma~\ref{lemma;choosegoodiden}.
	The value of the function
	\bes
	\pol_{\borw,h^+,h^-}\big(g_0\circ\borw(\lambda)\big)
	\ees
	with respect to the variable~$g=\iden(\kappa,Z)\in G$ does not depend on the real part~$X$ of~$Z$, for any~$\lambda\in\brK\otimes\RR$ and any~$h^+,h^-$.
	\end{lemma}
	\begin{proof}[Proof of Lemma~\ref{lemma;nodependenceonXgen1}]
	Recall that we denote by~$x_j=(\genvec,\basevec_j)$ the coordinate of any vector~${\genvec\in L\otimes\RR}$ with respect to the standard basis vector~$\basevec_j$, and by~$g_0\colon L\otimes\RR\to\RR^{b,2}$ the isometry defined as~$g_0(\genvec)=(x_1,\dots,x_{b+2})^t$.
	If~$Z\in\tubedom$, we denote by~$z$ its corresponding point on the Grassmannian~$\Gr(L)$.
	
	By Lemma~\ref{lemma;choosegoodiden}, the isometry~$\iden(1,Z)$ preserves the isotropic line~$\RR\genU$, for every~$Z\in\tubedom$.
	This means that there exists a function~$c\colon\tubedom\to\RR\setminus\{0\}$ such that~$\iden(1,Z)(\genU)=c(Z)\cdot\genU$.
	Since~$\iden$ is a diffeomorphism, the function~$c$ is smooth.
	Moreover, since~$\iden(1,Z_0)$ is the identity by construction, and hence~$c(Z_0)=1$ where~$Z_0$ is the point of the tube domain identified with the base point~$z_0\in\Gr(L)$, then~$c(Z)>0$ for every~$Z\in\tubedom$.
	The vector~$\genU_z/|\genU_z|$ has norm~$1$, hence
	\bes
	\iden(1,Z)\Big(\frac{\genU_z}{|\genU_z|}\Big)=\frac{c(Z)}{|\genU_z|}\cdot\genU_{z_0}
	\ees
	is a norm~$1$ vector, from which we deduce that~$c(Z)=|\genU_z|/|\genU_{z_0}|=|\genU_{z^\perp}|/|\genU_{z^\perp_0}|$.
	
	For every~$g\in G$, we rewrite~$g^{-1}(\basevec_j)$ with respect to the decomposition
	\bes
	L\otimes\RR=\RR\genU_{z^\perp}\oplus\RR\genU_z\oplus w^\perp\oplus w
	\ees
	as
	\be\label{eq;splitg-1v_j}
	g^{-1}(\basevec_j)=A_j(g)\cdot\genU_{z^\perp} + B_j(g)\cdot\genU_z + g^{-1}(\basevec_j)_{w^\perp\oplus w},
	\ee
	where~$A_j,B_j\colon G\to\RR$ are the auxiliary functions defined as
	\bes
	A_j(g)=\frac{\big(g^{-1}(\basevec_j),\genU_{z^\perp}\big)}{\genU_{z^\perp}^2}\qquad\text{and}\qquad B_j(g)=\frac{\big(g^{-1}(\basevec_j),\genU_z\big)}{\genU_z^2},
	\ees
	and where~$g^{-1}(\basevec_j)_{w^\perp\oplus w}$ is the orthogonal projection of~$g^{-1}(\basevec_j)$ on~$w^\perp\oplus w$.
	Suppose that~$g=\iden(\kappa,Z)$, for some~$\kappa\in K$ and~$Z\in\tubedom$.
	We may compute
	\be\label{eq;A_jcomexplgen1}
	A_j\big(\iden(\kappa,Z)\big) = \frac{\Big(\basevec_j,\big(\kappa\cdot\iden(1,Z)(u)\big)_{z_0^\perp}\Big)}{\genU_{z^\perp}^2}=
	\frac{\Big(\basevec_j,c(Z)\cdot\big(\kappa(u)\big)_{z_0^\perp}\Big)}{\genU_{z^\perp}^2}=\frac{\Big(\basevec_j,\kappa(\genU_{z^\perp_0})\Big)}{|\genU_{z^\perp}|\cdot|\genU_{z_0^\perp}|}.
	\ee
	Since~$|\genU_{z^\perp}|=1/|Y|$ by Lemma~\ref{lemma;someformufortubdom}, we deduce that the value of the function~$A_j$ does not depend on~$X$.
	The same procedure, with~$z$ in place of~$z^\perp$, shows that also the value of~$B_j$ does not depend on~$X$.
	
	The polynomial~$\pol\big(g_0(\genvec)\big)$ has~$x_j=(\genvec,\basevec_j)$ as variables, hence~$\pol\big(g_0\circ g(\genvec)\big)$ is a polynomial of variables~$\big(\genvec,g^{-1}(\basevec_j)\big)$, for every~$g\in G$.
	To construct the polynomials~$\pol_{\borw,h^+,h^-}$, we need to split~$g^{-1}(\basevec_j)$ as in~\eqref{eq;splitg-1v_j}, replace these in the variables of~$\pol\big(g_0\circ g(\genvec)\big)$, and gather all factors of the form~$(\genvec,\genU_{z^\perp})$ and~$(\genvec,\genU_z)$.
	We then deduce that~${\pol_{\borw,h^+,h^-}\big(g_0\circ\borw(\genvec)\big)}$ is a function of~$A_j(g)$, $B_j(g)$ and~$\big(\genvec,g^{-1}(\basevec_j)_{w^\perp\oplus w}\big)$, where~$j$ runs from~$1$ to~$b+2$.
	
	We want to prove that~$\pol_{\borw,h^+,h^-}\big(g_0\circ\borw(\lambda)\big)$ does not depend on the real part~$X$, for every~${\lambda\in\brK\otimes\RR}$, where we identify~$g=\iden(\kappa,Z)$.
	We already proved that~$A_j$ and~$B_j$ does not depend on~$X$.
	We rewrite
	\be\label{eq;lamg-1v_j}
	\Big(\lambda,g^{-1}(\basevec_j)_{w^\perp\oplus w}\Big) = \Big(\lambda_{w^\perp\oplus w},g^{-1}(\basevec_j)\Big) = \Big( g(\lambda_{w^\perp\oplus w}),\basevec_j\Big)=\Big(\kappa\cdot\borwiden{1}{Z}(\lambda),\basevec_j\Big),
	\ee
	and remark that the right-hand side of~\eqref{eq;lamg-1v_j} does not depend on~$X$ by Lemma~\ref{lemma;choosegoodiden}.
	This concludes the proof.
	\end{proof}
	\begin{proof}[Proof of Theorem~\ref{thm;Fourexpofcoef}]
	We consider the unfolding of~$\intfunct$ provided by Corollary~\ref{cor;unfolded!}.
	The first summand of the right-hand side of~\eqref{eq;firstunftrickap} is part of the constant term of the Fourier expansion of~$\intfunct$, since it does not depend on~$X$.
	In fact, by Lemma~\ref{lemma;someformufortubdom}, we may rewrite it with respect to the identification~$\iden$ as
	\ba\label{eq;proofofFexpgen1}
	\int_{\SL_2(\ZZ)\backslash\HH}&\frac{y^{k+1/2}f(\tau)}{\sqrt{2}\,|\genU_{z^\perp}|}
	\,
	\overline{\Theta_{\brK}(\tau,\borw,\polw{(\alpha,\beta),\borw}{0}{0})}\frac{dx\,dy}{y^2}
	\\
	=&\int_{\SL_2(\ZZ)\backslash\HH}\frac{y^{k+1/2}f(\tau)|Y|}{\sqrt{2}}\sum_{\lambda\in\brK}\exp(-\Delta/8\pi y)\big(\polw{(\alpha,\beta),\borw}{0}{0}\big)\big(g_0\circ\borw(\lambda)\big)
	\\
	&\times e\big( -xq(\lambda)\big)\cdot \exp\big(-\pi y\lambda^2+2\pi y(\lambda,Y)^2/Y^2\big)\frac{dx\,dy}{y^2}.
	\ea
	Lemma~\ref{lemma;nodependenceonXgen1} implies that such a value does not depend on~$X$.
	
	As we are going to show soon, all other non-zero Fourier coefficients of the remaining summand~$\int_{\Gamma_\infty\backslash\HH}\hhab(\tau,g)\frac{dx\,dy}{y^2}$ of~\eqref{eq;firstunftrickap} correspond to some $\lambda\in\brK$ of positive norm, so that~$e\big((\lambda,X)\big)$ is not a constant function.
	This implies that~\eqref{eq;proofofFexpgen1} is exactly the constant term of the Fourier expansion of~$\intfunct$.
	
	We now begin the computation of the Fourier expansion of the second summand appearing on the right-hand side of~\eqref{eq;firstunftrickap}.
	First of all, we compute the series expansion with respect to~$\tau=x+iy\in\HH$ of~$f(\tau)\cdot\overline{\Theta_{\brK}(\tau,r\mu,0,\borw,\polw{(\alpha,\beta),\borw}{h^+}{0})}$.
	To do so, we replace~$f$ and~$\Theta_{\brK}$ respectively with~\eqref{eq;fexpf} and~\eqref{eq;precFexpofTllor}, deducing that such a product equals
	\begin{align*}
%	\Big(\sum_{n>0}c_n(f)\exp(-2\pi ny)e(nx)\Big)\cdot \Big(
%	\sum_{\lambda\in\brK}\exp(-\Delta/8\pi y)\big(\polw{(\alpha,\beta),\borw}{h^+}{0}\big)\big(g_0\circ\borw(\lambda)\big)\times\\
%	\times \exp\big(-\pi y(\lambda,\lambda)_w\big)\cdot e\big(-xq(\lambda)\big)\cdot e\big(r(\lambda,\mu)\big)
%	\Big)=\\
%	=
	\sum_{m\in\ZZ}
	&
	\sum_{\substack{n>0,\,\lambda\in\brK\\ n-q(\lambda)=m}}c_n(f)\cdot\exp(-2\pi ny)\cdot\exp(-\Delta/8\pi y)\big(\polw{(\alpha,\beta),\borw}{h^+}{0}\big)\big(g_0\circ\borw(\lambda)\big)
	\\
	&\times \exp\big(-\pi y(\lambda,\lambda)_w\big)\cdot e\big(r(\lambda,\mu)\big)
	\cdot e(mx).
	\end{align*}
	
	We insert the previous formula in the defining formula of~$\hhab$ provided by Proposition~\ref{prop;compauxhab}, and then replace this in the second summand of the right-hand side of~\eqref{eq;firstunftrickap} deducing that
	\ba\label{eq;killingx}
	2\int_{\Gamma_\infty\backslash\HH}
	&
	\hhab(\tau,g)\frac{dx\,dy}{y^2}=
	\frac{\sqrt{2}}{|\genU_{z^\perp}|}
	\sum_{h^+=0}^2
	\sum_{r\ge1}
	\Big(\frac{r}{2i}\Big)^{h^+}
	\sum_{m\in\ZZ}\sum_{\substack{n>0, \lambda\in\brK\\ n-q(\lambda)=m}}
	c_n(f)
	\\
	&\times  e\big(r(\lambda,\mu)\big) \int_{0}^{+\infty}y^{k-h^+-3/2}\exp\Big(-2\pi ny - \pi y(\lambda,\lambda)_w	- \frac{\pi r^2}{2y\genU_{z^\perp}^2}
	\Big)
	\\
	&\times \exp(-\Delta/8\pi y)\big(\polw{(\alpha,\beta),\borw}{h^+}{0}\big)\big(g_0\circ\borw(\lambda)\big) dy  \int_0^1 
	e(mx)dx.
	\ea
	Since~$\int_0^1 e(mx)dx$ equals~$1$ if~$m=0$ and is trivial otherwise, we may simplify~\eqref{eq;killingx} extracting the terms associated with~${m=0}$, obtaining that
	\ba\label{eq;killedx}
	2
	&
	\int_{\Gamma_\infty\backslash\HH}\hhab(\tau,g)\frac{dx\,dy}{y^2}=
	\frac{\sqrt{2}}{|\genU_{z^\perp}|}
	\sum_{\lambda\in\brK} c_{q(\lambda)}(f)
	\sum_{h^+=0}^2
	\sum_{r\ge1}
	\Big(\frac{r}{2i}\Big)^{h^+}	
	\int_{0}^{+\infty}y^{k-h^+-3/2}
	\\
	&\times\exp\Big(-2\pi y \lambda_{w^\perp}^2 -
	\frac{\pi r^2}{2y\genU_{z^\perp}^2}\Big)\cdot\exp(-\Delta/8\pi y)\big(\polw{(\alpha,\beta),\borw}{h^+}{0}\big)\big(g_0\circ\borw(\lambda)\big)dy \cdot e\big(r(\lambda,\mu)\big).
	\ea
	
	Since~$e\big((\lambda,\mu)\big)=e\big((\lambda,X)\big)$ by Lemma~\ref{lemma;someformufortubdom}, we may rewrite~\eqref{eq;killedx} in the same shape of~\eqref{eq;foucoefcor0}, i.e.\ we gather the terms multiplying~$e\big((\lambda,\mu)\big)$, for every~$\lambda$.
	This can be done simply replacing the sum~$\sum_{r\ge1}$ with~$\sum_{t\ge1,\,t|\lambda}$, and the lattice vector~$\lambda$ with~$\lambda/t$.
	In this way, we obtain that
	\ba\label{eq;FseriesexpofFgen1}
	2
	&
	\int_{\Gamma_\infty\backslash\HH}\hhab(\tau,g)\frac{dx\,dy}{y^2}=
	\frac{\sqrt{2}}{|\genU_{z^\perp}|}
	\sum_{\lambda\in\brK}
	\sum_{h^+=0}^2
	\sum_{\substack{t\ge 1\\ t|\lambda}}
	\Big(\frac{t}{2i}\Big)^{h^+}
	c_{q(\lambda/t)}(f)
	\int_0^{+\infty} y^{k-h^+-3/2}
	\\
	&\times \exp\Big(-\frac{2\pi y \lambda_{w^\perp}^2}{t^2} - 
	\frac{\pi t^2}{2y\genU_{z^\perp}^2}
	\Big)\cdot \exp(-\Delta/8\pi y)\big(\polw{(\alpha,\beta),\borw}{h^+}{0}\big)\big(g_0\circ\borw(\lambda/t)\big)dy\cdot e\big((\lambda,\mu)\big).
	\ea
	This is the Fourier expansion of~$2\int_{\Gamma_\infty\backslash\HH}\hhab(\tau,g)\frac{dx\,dy}{y^2}$.
	In fact, if we identify~$G$ with~$K\times\tubedom$ under~$\iden$, and write~$g=\iden(\kappa,Z)$, then by Lemma~\ref{lemma;someformufortubdom} we may rewrite~\eqref{eq;FseriesexpofFgen1} as
	\begin{align}\label{eq;killedxintubedomain}
	& 2 \int_{\Gamma_\infty\backslash\HH}\hhab(\tau,g)\frac{dx\,dy}{y^2}\\
	&
	=\sqrt{2}\,|Y|\sum_{\lambda\in\brK}
	\sum_{\text{$t\ge 1$, $t|\lambda$}}
	c_{q(\lambda/t)}(f)
	\sum_{h^+=0}^2
	\Big(\frac{t}{2i}\Big)^{h^+}
	\int_{0}^{+\infty}y^{k-h^+-3/2}
	\exp\Big(-\frac{2\pi y \lambda^2}{t^2}\Big)
	\nonumber
	\\
	& \quad\times \exp\Big(\frac{2\pi y(\lambda,Y)^2}{t^2 Y^2} + \frac{\pi t^2 Y^2}{2y}\Big) \exp(-\Delta/8\pi y)\big(\polw{(\alpha,\beta),\borw}{h^+}{0}\big)\big(g_0\circ\borw(\lambda/t)\big) dy
	\cdot e\big((\lambda,X)\big)\nonumber.
	\end{align}
	By Lemma~\ref{lemma;nodependenceonXgen1} the coefficient multiplying~$e\big((\lambda,X)\big)$ in~\eqref{eq;killedxintubedomain} does not depend on~$X$.
	\end{proof}
	
	%%%%%%%%%%%%%%%%%%%
	\section{The injectivity of the Kudla--Millson lift}\label{sec;injKMgenus1}
	
	This section is devoted to the proof of the injectivity of the Kudla--Millson lift~$\KMliftbase$ associated to \emph{unimodular} lattices of signature~$(b,2)$.
	Although such a result has already been proved in~\cite{br;converse}, the procedure here proposed has the advantage of paving the ground for a strategy to prove the injectivity of the lift in several other cases; see~\cite{metzlerzuffetti} and~\cite{kieferzuffetti}.
	The case of non-unimodular lattices is carried out in Section~\ref{sec;nonunimodlattgen1}.
	\begin{thm}\label{thm;injindeg1}
	Let $L$ be a unimodular lattice of signature $(b,2)$, with $b>2$.
	The Kudla--Millson theta lift $\KMliftbase$ associated to $L$ is injective. 
	\end{thm}
	To prove Theorem~\ref{thm;injindeg1}, we need the following ancillary result.
	\begin{lemma}\label{lemma;technnonzeropolinprinj}
	Let $\lambda\in\brK\otimes\RR$ be such that $q(\lambda)>0$.
	There exist two different indexes~${\alpha,\beta\in\{1,\dots,b-1\}}$, and $g\in G$, such that
	\bes
	\polw{(\alpha,\beta),\borw}{1}{0}\big(g_0\circ\borw(\lambda)\big) > 0.
	\ees
	\end{lemma}
%	\textcolor{Cyan}{\textbf{If~$L$ is not unimodular:}
%	The result is still true, actually for every $\lambda\in\brK+\nunilor$, and every $\nunilor\in \brK'/\brK$, but under the usual hypothesis that $L$ is \textbf{of signature $\boldsymbol{(b,2)}$, with $\boldsymbol{b>2}$}.
%	Otherwise, there are not enough indexes $\alpha,\beta$ we can consider...
%	}
	\begin{proof}[Proof of Lemma~\ref{lemma;technnonzeropolinprinj}]
	Recall from Section~\ref{sec;follBorcherds} that we may use the standard basis vectors~$\basevec_j$ of~$L\otimes\RR$ to construct a basis of the subspace $\brK\otimes\RR$ as $\basevec_1,\dots,\basevec_{b-1},\basevec_{b+1}$.
	We rewrite the vector~$\lambda\in\brK\otimes\RR$ with respect to such a basis as
	\bes
	\lambda=\sum_{j=1}^{b-1}\lambda_j \basevec_j+\lambda_{b+1}\basevec_{b+1},
	\ees
	for some real coefficients $\lambda_j$.
	Since
	\bes
	2q(\lambda)=\sum_{j=1}^{b-1}\lambda_j^2-\lambda_{b+1}^2,
	\ees
	and since $q(\lambda)>0$ by assumption, there exists an index $\beta\in\{1,\dots,b-1\}$ such that the~$\beta$-th coordinate $\lambda_\beta$ of $\lambda$ is \textcolor{\myblack}{non-zero}.
	
	Let $\alpha\in\{1,\dots,b-1\}$ be such that $\alpha\neq\beta$.
	Recall from~\eqref{eq;comppolPab} that~$\polab\big(g_0(\genvec)\big)=2x_\alpha x_\beta,$ for every~$\genvec=\sum_j x_j\basevec_j\in L\otimes\RR$.
	We define~$g\in G$ to be the isometry interchanging~$\basevec_\alpha$ with~$\basevec_b$, and~$\basevec_{b+1}$ with~$\basevec_{b+2}$, fixing the remaining standard basis vectors.
	We remark that~$g$ is an element of the stabilizer~$K$ of the base point~${z_0\in \Gr(L)}$.
	For this choice of~$g$ we deduce that~$\polab(g_0\circ g(\genvec))=2x_b x_\beta$, since
	\bes
	g(\genvec)=\sum_{j=1}^{b+2}x_j g(\basevec_j)=x_1\basevec_1+\dots+x_b \basevec_\alpha+\dots+x_\alpha \basevec_b+\dots+x_{b+2}\basevec_{b+1}+x_{b+1}\basevec_{b+2}.
	\ees
	
	We write~$\polab$ as in~\eqref{eq;decpolborh-0}, for some homogeneous polynomials~${\polw{(\alpha,\beta),\borw}{h^+}{0}}$ of degree respectively $(2-h^+,0)$ on the subspaces~$g_0\circ \borw(L\otimes\RR)\cong\RR^{b-1,1}$.
	Since we chose~${u=(\basevec_b + \basevec_{b+2})/\sqrt{2}}$, and since the base point~$z_0$ of~$\Gr(L)$, stabilized by~$g$, is the negative definite plane in~$L\otimes\RR$ generated by~$\basevec_{b+1}$ and~$\basevec_{b+2}$, we deduce that~$u_{z_0^\perp}=\basevec_b/\sqrt{2}$.
	This implies that~$(\genvec,u_{z_0^\perp})=x_b/\sqrt{2}$, hence we deduce that
	\be\label{eq;prooftecxbeta}
	\polab\big(g_0\circ g(\genvec)\big)=(\genvec,u_{z_0^\perp})\cdot 2\sqrt{2}x_\beta.
	\ee
	
	If we compare~\eqref{eq;prooftecxbeta} with~\eqref{eq;decpolborh-0}, or alternatively use Lemma~\ref{lemma;rewritborchimpldecourpol}, we see that for this special choice of~$g$ we have
	\bes
	\polw{(\alpha,\beta),\borw}{h^+}{0}\big(g_0\circ\borw(\genvec)\big)=\begin{cases}
	2\sqrt{2}x_\beta, & \text{if $h^+=1$,}\\
	0, & \text{otherwise.}
	\end{cases}
	\ees
	Since we chose $\beta$ such that the $\beta$-th coordinate of $\lambda$ is positive, we than conclude that~$\polw{(\alpha,\beta),\borw}{1}{0}\big(g_0\circ\borw(\lambda)\big)>0$.
	\end{proof}
	
	We are now ready to prove the main result of this section.
	
	\begin{proof}[Proof of Theorem~\ref{thm;injindeg1}]
	Let $f\in S^k_1$ be such that $\KMliftbase(f)=0$.
	We want to prove that this implies $f=0$.	
	Recall from~\eqref{eq;KMlambdaaO} that
	\be\label{eq;injrec}
	\KMliftbase(f)=\sum_{\alpha,\beta=1}^{b}\intfunct(g) \cdot g^*\Big(\omega_{\alpha, b+1}\wedge\omega_{\beta, b+2}\Big),\quad\text{for every~$g\in G$.}
	\ee
	Since the vectors~$\omega_{\alpha, b+1}\wedge\omega_{\beta, b+2}$, where $\alpha,\beta=1,\dots,b$, are linearly independent in~${\bigwedge}^2(\mathfrak{p})^*$, we deduce from~\eqref{eq;injrec} that~$\KMliftbase(f)=0$ if and only if the defining integrals~$\intfunct$ are all zero, namely
	\be\label{eq;injreccoeffab}
	\int_{\SL_2(\ZZ)\backslash\HH}y^{k+1}f(\tau)\,\overline{\Theta_L(\tau,g,\polab)}\,\frac{dx\,dy}{y^2}=0,\qquad\text{for every $\alpha,\beta$ and for every $g\in G$.}
	\ee

	As complex valued functions on~$G$, the defining integrals~$\intfunct\colon G\to\CC$ of the Kudla--Millson lift of~$f$ admit a Fourier expansion in the sense of Section~\ref{sec;FexpLlorinvfuncts}.
	By Theorem~\ref{thm;Fourexpofcoef}, the Fourier expansion of such defining integrals is
	\ba\label{eq;Fexpfininproofinj}
	\intfunct(g)
	& =
	\int_{\SL_2(\ZZ)\backslash\HH}
	\frac{y^{k+1/2}f(\tau)}{\sqrt{2}\,|\genU_{z^\perp}|}
	\overline{\Theta_{\brK}(\tau,\borw,\polw{(\alpha,\beta),\borw}{0}{0})}\frac{dx\,dy}{y^2}
	\\
	&\quad +
	\frac{\sqrt{2}}{|\genU_{z^\perp}|}\sum_{\lambda\in\brK}\sum_{h^+=0}^2
	\sum_{\substack{t\ge 1\\ t|\lambda}}
	\Big(\frac{t}{2i}\Big)^{h^+} c_{q(\lambda)/t^2}(f)
	\int_0^{+\infty}
	\exp\Big(-\frac{2\pi y \lambda_{w^\perp}^2}{t^2} -
	\frac{\pi t^2}{2y\genU_{z^\perp}^2}
	\Big)
	\\
	&\quad\times 
	y^{k-h^+-3/2}
	\exp(-\Delta/8\pi y)\big(\polw{(\alpha,\beta),\borw}{h^+}{0}\big)\big(g_0\circ\borw(\lambda/t)\big)dy\cdot e\big((\lambda,\mu)\big).
	\ea
%	Recall that the first summand of the right-hand side of~\eqref{eq;Fexpfininproofinj} is the constant term of the Fourier expansion.
	We deduce from~\eqref{eq;injreccoeffab} that the Fourier coefficients of the Fourier expansion~\eqref{eq;Fexpfininproofinj} are all zero.	
	We want to use this to show that $c_n(f)=0$ for every positive integer $n$, that is, the cusp form $f$ is zero.
	
	We work by induction on the divisibility of all lattice vectors~$\lambda\in\brK$ such that~$q(\lambda)>0$.
	Suppose that~$\lambda$ is \emph{primitive}, that is, the only integer~$t\ge1$ dividing~$\lambda$ is~$t=1$.
	The fact that the Fourier coefficient of~\eqref{eq;Fexpfininproofinj} associated to~$\lambda$ equals zero means that
	\ba\label{eq;injreccorFcoefffc}
	&\frac{\sqrt{2} c_{q(\lambda)}(f)}{|\genU_{z^\perp}|}
	\sum_{h^+=0}^2 (2i)^{-h^+}
	\int_0^{+\infty}
	y^{k-h^+-3/2}
	\exp\Big(-2\pi y \lambda_{w^\perp}^2 -
	\frac{\pi}{2yu_{z^\perp}^2}
	\Big)
	\\
	&\quad\times
	\exp(-\Delta/8\pi y)\big(\polw{(\alpha,\beta),\borw}{h^+}{0}\big)\big(g_0\circ\borw(\lambda)\big)
	dy=0.
	\ea
	Note that the integral appearing in~\eqref{eq;injreccorFcoefffc} is a \emph{real} number.
	
	We are going to prove that there exist two different indices $\alpha,\beta\in\{1,\dots,b-1\}$ and an isometry $g\in G$, such that the sum over $h^+$ appearing on the left-hand side of~\eqref{eq;injreccorFcoefffc} is non-zero.
	This implies that~$c_{q(\lambda)}(f)=0$, concluding the first step of the induction.
	
	By Lemma~\ref{lemma;technnonzeropolinprinj} there exist two different indices~$\alpha,\beta$, and an isometry~$g\in G$, such that~$\polw{(\alpha,\beta),\borw}{1}{0}\big(g_0\circ\borw(\lambda)\big)\neq 0$.
	This implies that, for such choices of~$\alpha,\beta$ and~$g$, the sum over~$h^+$ appearing on the left-hand side of~\eqref{eq;injreccorFcoefffc} is a non-zero complex number.
	In fact, its imaginary part is
	\ba\label{eq;realpartproofinj}
	-\frac{1}{2}\polw{(\alpha,\beta),\borw}{1}{0}\big(g_0\circ\borw(\lambda)\big)\cdot\int_0^{+\infty} y^{k-5/2}\cdot \exp\Big(-2\pi y \lambda_{w^\perp}^2 -
	\frac{\pi}{2yu_{z^\perp}^2}
	\Big)dy.
	\ea
	Note that the integral appearing in~\eqref{eq;realpartproofinj} is a \emph{positive} real number.
	We remark that in~\eqref{eq;realpartproofinj} we drop the operator~$\exp(-\Delta/8\pi y)$ acting on~$\polw{(\alpha,\beta),\borw}{1}{0}$, since the latter is a polynomial of degree one, hence harmonic.
	
	We now use induction.
	Suppose that $c_{q(\lambda')}(f)=0$ for every $\lambda'\in\brK$ divisible by at most~$s$ positive integers.
	Let $\lambda\in\brK$ be such that it is divisible by~$s+1$ integers~${1<d_1<\dots <d_s}$.
	Since $c_{q(\lambda/d_j)}(f)=0$ for every $j=1,\dots,s$ by inductive hypothesis, we may simplify the formula of the Fourier coefficient associated to $\lambda$ of the Fourier expansion~\eqref{eq;Fexpfininproofinj} again as~\eqref{eq;injreccorFcoefffc}, where this time $\lambda$ is non-primitive.
	Since the primitivity of~$\lambda$ does not play any role in Lemma~\ref{lemma;technnonzeropolinprinj}, we may deduce~$c_{q(\lambda)}(f)=0$ with the same procedure used for the case of primitive~$\lambda$.
	
	To conclude the proof, it is enough to show that for every positive integer~$n$ there exists~$\lambda\in\brK$ such that~$n=q(\lambda)$, and hence~$c_n(f)=0$ by the previous inductive argument.
	Equivalently, we want to prove that the quadratic form of the lattice~$\brK$ represents every positive integer.
	This is ensured from the unimodularity of~$\brK$, since this implies that~$\brK$ splits off an hyperbolic plane.
	In fact, it is well-known that the quadratic form of an hyperbolic plane represents all positive integers.\hfill\qedhere
	\end{proof}
		
	%%%%%%%%%%%%%%%%%%%
	\section{The case of non-unimodular lattices}\label{sec;nonunimodlattgen1}
	
We illustrated above how to prove the injectivity of the Kudla--Millson lift in the case of even unimodular lattices of signature~$(b,2)$.
	In this section we describe what needs to be changed to deal with non-unimodular lattices.
	In particular, we provide a proof of the injectivity of the Kudla--Millson lift~$\KMliftbase$ in the case of (not necessarily unimodular) lattices~$L$ that split off~$U(N)\oplus U$, for some positive integer~$N$.
	This result is as in~\cite[Theorem~$5.3$]{br;converse}, but proved here in a different way.
	
	The procedure that we follow in this section is essentially the same as the one used in the previous sections.
	This motivates why we emphasize here only the main differences with respect to the previous easier case, without providing the same amount of details.\\
	
	Throughout this section we denote by~$L$ a (not necessarily unimodular) even lattice of signature~$(b,2)$, where~$b>2$, and we set~$k=1+b/2\in\frac{1}{2}\ZZ$.
	The \emph{discriminant group} associated to~$L$ is the quotient~$L'/L$, where~$L'$ is the dual of~$L$.
	The quadratic form~$q$ of~$L$ induces a~$\QQ/\ZZ$-valued quadratic form on~$L'/L$, which we still denote by~$q$.
	
	We denote by~$(\mathfrak{e}_h)_{h\in L'/L}$ the standard basis of the group algebra~$\CC[L'/L]$, and by~$\langle\cdot{,}\cdot\rangle$ the standard scalar product of~$\CC[L'/L]$ defined as
	\bes
	\Big\langle
	\sum_{h\in L'/L}\lambda_h\mathfrak{e}_h,\sum_{h\in L'/L}\mu_h\mathfrak{e}_h
	\Big\rangle \coloneqq \sum_{h\in L'/L}\lambda_h\overline{\mu_h}.
	\ees
	
	Let~$\rho_L$ be the Weil representation of the metaplectic group~$\Mp_2(\ZZ)$ on~$\CC[L'/L]$; see~\cite[Section~$1.1$]{br;borchp} for details.
	A \emph{weight~$k$ modular form with respect to~$\rho_L$ and~$\Mp_2(\ZZ)$} is a function~$f\colon\HH\to\CC[L'/L]$ which is holomorphic on~$\HH$ and at the cusp~$\infty$, and that satisfies the modularity law
	\bes
	f(\gamma\cdot\tau)=\phi(\tau)^{2k}\cdot\rho_L(\gamma,\phi)\cdot f(\tau),
	\ees
	for every~$(\gamma,\phi)\in\Mp_2(\ZZ)$ and every~$\tau\in\HH$.
	We denote the components of~$f$ by~$f_h$, so that~$f=\sum_{h\in L'/L}f_h\mathfrak{e}_h$.
	These vector-valued modular forms admit a Fourier expansion, which we write as
	\bes
	f(\tau)=
	\sum_{h\in L'/L}\sum_{\substack{n\in\ZZ+q(h)\\ n\ge 0}}c_n(f_h)\exp(-2\pi ny)e(nx)\mathfrak{e}_h,
	\ees
	where $c_n(f_h)$ is the $n$-th Fourier coefficient of $f_h$, or equivalently the $n$-th Fourier coefficient of index~$h$ of~$f$.
	If all~$c_0(f_h)$ vanish, then~$f$ is called a \emph{cusp form}.
	We denote by~$M^k_{1,L}$, resp.~$S^k_{1,L}$, the space of modular forms, resp.\ cusp forms, of weight~$k$ with respect to~$\rho_L$ and~$\Mp_2(\ZZ)$.
	
	We may rewrite the Kudla--Millson theta form attached to the lattice~$L$ as
	\ba\label{eq;KMthetaformvv}
	&\Theta(\tau, z,\varphi_{\text{\rm KM}})
	\\
	&
	=y^{-k/2}\sum_{h\in L'/L}\sum_{\lambda\in L+h}
	\Big(\omega_\infty(g_\tau)\varphi_{\text{\rm KM}}\Big)(\lambda,z)
	\mathfrak{e}_h
	\\
	&= \sum_{\alpha,\beta=1}^b\underbrace{y^{-k/2}\sum_{h\in L'/L}\sum_{\lambda\in L+h}\Big(\omega_\infty(g_\tau)(\Gpol_{(\alpha,\beta)}\varphi_0)\Big)\big(g_0\circ g(\lambda)\big)\mathfrak{e}_h}_{\eqqcolon \FFab(\tau,g)}
	\textcolor{\myblack}{\otimes}\,
	g^*(\omega_{\alpha,b+1}\wedge\omega_{\beta,b+2}),
	\ea
	where~$g\in G$ is any isometry mapping~$z\in\hermdom=\Gr(L)$ to the base point~$z_0$, and~$\Gpol_{(\alpha,\beta)}$ is the polynomial on~$\RR^{b,2}$ defined in~\eqref{eq;comppolPab}.
	The auxiliary function~$\FFab$ highlighted in~\eqref{eq;KMthetaformvv} may be rewritten in terms of \emph{vector-valued} Siegel theta functions as
	\bas
	\FFab(\tau,g)
	&=
	y\cdot\sum_{h\in L'/L}\sum_{\lambda\in L+h}\exp(-\Delta/8\pi y)(\polab)\big(g_0\circ g(\lambda)\big)\cdot e\Big(
	\tau q(\lambda_{z^\perp})+\bar{\tau}q(\lambda_z)
	\Big)\mathfrak{e}_h
	\\
	&=y\cdot\Theta_L(\tau,g,\polab).
	\eas
	We suggest the reader to recall the vector valued theta functions~$\Theta_L$, together with their modular transformation properties, from~\cite[Section~$4$]{bo;grass}.
	Whenever~$L$ is unimodular, they are exactly the ones introduced in Section~\ref{sec;Siegtheta}.
	
	The Kudla--Millson lift~$\KMliftbase\colon S^k_{1,L}\to\mathcal{Z}^2(\hermdom)$ is defined as
	\be\label{eq;liftingLambdavv}
	f\longmapsto\KMliftbase(f)=
	\int_{\SL_2(\ZZ)\backslash\HH}
	y^k \langle f(\tau),\Theta(\tau,z,\varphi_{\text{\rm KM}})\rangle\frac{dx\, dy}{y^2},
	\ee
	where $\frac{dx\, dy}{y^2}$ is the standard~$\SL_2(\ZZ)$-invariant volume element of~$\HH$.
	We may rewrite such a lift by means of~\eqref{eq;KMthetaformvv} as
	\ba\label{eq;KMlambdaaOvv}
	\KMliftbase(f)=\sum_{\alpha,\beta=1}^{b}\Big(
	\underbrace{\int_{\SL_2(\ZZ)\backslash\HH} y^{k+1} \langle f(\tau),\Theta_L(\tau,g,\polab)\rangle\frac{dx\,dy}{y^2}}_{\eqqcolon\intfunct(g)}
	\Big)
	\cdot
	g^*\Big(\omega_{\alpha, b+1}\wedge\omega_{\beta, b+2}\Big).
	\ea
	
	We refer to the integrals~$\intfunct$ appearing in~\eqref{eq;KMlambdaaOvv} as the \emph{defining integrals} of the lift~$\KMliftbase(f)$.
	We want to compute their Fourier expansions by applying the unfolding trick.
	To do this, we need to introduce another piece of notation, following the wording of~\cite[pp.~$41$-$42$]{br;borchp}.
	Recall that we do \emph{not} assume that~$L$ splits off any hyperbolic plane, for now.
	
	Let~$\genU$ be a primitive norm~$0$ vector of~$L$, and let~$\genUU\in L'$ be such that~${(\genU,\genUU)=1}$.
	Define~$\brK=(L\cap\genU^\perp)/\ZZ \genU$, and write~$\brN$ for the smallest positive value of the inner product of~$\genU$ with something in~$L$, so that~$|L'/L|=\brN^2|\brK'/\brK|$.
	Let~$L_0'$ be the sublattice of~$L'$ defined as
	\bes
	L_0'=\{\text{$\lambda\in L'$ : $(\lambda,\genU)\equiv 0$ mod $\brN$}\}.
	\ees
	We consider the projection~$p\colon L_0'\to\brK'$ constructed in~\cite[$(2.7)$]{br;borchp}.
	This map is such that~$p(L)=\brK$, and induces a surjective map~$L_0'/L\to\brK'/\brK$ which we also denote by~$p$.
	We recall that~$L_0'/L=\{\text{$\lambda\in L'/L$ : $(\lambda,\genU)\equiv 0$ mod $\brN$}\}$.
	
	By~\cite[Theorem~$5.2$]{bo;grass} we may rewrite the integrand of~$\intfunct$ as
	\bas
	y^{k+1} \langle f(\tau),&\Theta_L(\tau,g,\polab)\rangle
	=
	\frac{y^{k+1/2}}{\sqrt{2}\,|\genU_{z^\perp}|}
	\big\langle
	f_{\brK}(\tau;0,0),\Theta_{\brK}(\tau,\borw,\polw{(\alpha,\beta),\borw}{0}{0})
	\big\rangle
	\\
	&
	+ \frac{y^{k+1/2}}{\sqrt{2}\,|\genU_{z^\perp}|}\sum_{\substack{c,d\in\ZZ\\ \gcd(c,d)=1}}
	\sum_{r\ge1}
	\sum_{h^+=0}
	\Big(\frac{r}{2iy}\Big)^{h^+}
	(c\tau+d)^{h^+} e\Big(
	-\frac{r^2 |c\tau+d|^2}{4iy\genU_{z^\perp}^2}
	\Big)
	\\
	&\times \big\langle
	f_{\brK}(\tau;-rd,rc),\Theta_{\brK}(\tau,rd\mu,-rc\mu,\borw,\polw{(\alpha,\beta),\borw}{h^+}{0})
	\big\rangle,
	\eas
	where~$f_{\brK}(\tau;r,t)$ is the function arising from~$f\in S^k_{1,L}$ constructed as in~\cite[($2.12$)]{br;borchp}.
	
	Let~$\hhab$ be the auxiliary~$\Gamma_\infty$-invariant function defined as
	\bas
	\hhab(\tau,g)
	&=
	\frac{y^{k+1/2}}{\sqrt{2}\,|\genU_{z^\perp}|}\sum_{r\ge1}\sum_{h^+=0}^2 (2iy)^{-h^+} r^{h^+} \exp\Big(
	-\frac{\pi r^2}{2y\genU_{z^\perp}^2}
	\Big)
	\\
	&\quad\times
	\big\langle f_{\brK}(\tau;-r,0),\Theta_{\brK}(\tau,r\mu,0,\borw,\polw{(\alpha,\beta),\borw}{h^+}{0})\big\rangle,
	\eas
	for every~$\tau\in\HH$ and~$g\in G$, where~$z=g^{-1}(z_0)\in\Gr(L)$.
	Following the same procedure of Proposition~\ref{prop;compauxhab}, together with~\cite[Theorem~$2.6$]{br;borchp}, we may deduce that
	\bas
	\langle f(\tau),\FFab(\tau,g)\rangle y^k
	&=
	\frac{y^{k+1/2}}{\sqrt{2}\,|\genU_{z^\perp}|}\langle f_{\brK}(\tau,0,0),\Theta_{\brK}(\tau,\borw,\polw{(\alpha,\beta),\borw}{0}{0})\rangle
	\\
	&\quad
	+
	\sum_{\gamma=\left(\begin{smallmatrix}
	* & *\\ c & d
	\end{smallmatrix}\right)\in\Gamma_\infty\backslash\SL_2(\ZZ)}\hhab(\gamma\cdot\tau,g).
	\eas
	
	We proceed with the unfolding of~$\intfunct$.
	We may deduce that the Fourier coefficient of~$\intfunct$ associated to~$\lambda\in\brK+h_{\Lor}$, where~$h_{\Lor}\in\brK'/\brK$ and~$q(\lambda)>0$, equals
	\ba\label{eq;Fcoeffq(brlam)>0vv}
	&\frac{\sqrt{2}}{|\genU_{z^\perp}|}
	\sum_{h^+=0}^2
	\sum_{\substack{t\in\ZZ_{>0}\\ t|\lambda}}
	\Big(
	\frac{t}{2i}
	\Big)^{h^+}
	\sum_{\substack{\nuni\in L_0'/L \\ p(h)=\nunilor/t}}e\big( t(\nuni,\genUU)\big)\cdot c_{q(\lambda)/t^2}(f_{\nuni})
	\int_0^{+\infty} y^{k-h^+-3/2}
	\\
	&\quad\times 
	\exp\Big(-\frac{2\pi y \lambda_{w^\perp}^2}{t^2} -
	\frac{\pi t^2}{2y\genU_{z^\perp}^2}
	\Big) \cdot
	\exp(-\Delta/8\pi y)\big(\polw{(\alpha,\beta),\borw}{h^+}{0}\big)\big(g_0\circ\borw(\lambda/t)\big)dy,
	\ea
	where \textcolor{\myblack}{we say that a positive integer $t$ divides $\lambda\in\brK'$, in short~$t|\lambda$, if and only if~$\lambda/t\in\brK'$}.
%	we say that a positive integer $t$ divides $\lambda\in\brK+\nunilor$, in short~$t|\lambda$, if and only if~$\lambda/t$ is a lattice vector of $\brK+\nuni'$, for some $\nuni'\in\brK'/\brK$.
	\begin{thm}(Bruinier)\label{thm;injindeg1vv}
	Let $L$ be an even lattice of signature $(b,2)$, with $b>2$, that splits off~${U(N)\oplus U}$, for some positive integer~$N$.
	The Kudla--Millson theta lift $\KMliftbase$ associated to $L$ is injective.
	\end{thm}
	Since a large part of the proof of Theorem~\ref{thm;injindeg1vv} is essentially the same as the one of Theorem~\ref{thm;injindeg1}, we provide only a sketch of it.
	\begin{proof}[Sketch of the proof]
	Let~$f\in S^k_{1,L}$ be such that~$\KMliftbase(f)=0$.
	This is equivalent to saying that~$\intfunct(g)=0$ for every~$\alpha,\beta$ and every~$g\in G$, since the vectors~$\omega_{\alpha,b+1}\wedge\omega_{\beta,b+2}$ appearing in~\eqref{eq;KMlambdaaOvv} are linearly independent in~$\bigwedge^2(\mathfrak{p}^*)$.
	We want to show that the vanishing of the defining integrals of~$\KMliftbase(f)$ implies that~$f=0$.
	
	The Fourier coefficients of~$\intfunct$ associated to~$\lambda\in\brK+h_{\Lor}$, where~$h_{\Lor}\in\brK'/\brK$ and~$q(\lambda)>0$, are as in~\eqref{eq;Fcoeffq(brlam)>0vv}.
	These coefficients are all zero, since so is~$\intfunct$.
	We show that the vanishing of the Fourier coefficients of~$\intfunct$ implies the vanishing of the Fourier coefficients of~$f$ by induction on the divisibility of~$\lambda$.
	Suppose that~$\lambda$ is \emph{primitive}.
	The fact that the Fourier coefficient~\eqref{eq;Fcoeffq(brlam)>0vv} associated to~$\lambda$ equals zero is equivalent to
	\ba\label{eq;brlamprimvv}
	&\frac{\sqrt{2}}{|\genU_{z^\perp}|}\Big(
	\sum_{\substack{\nuni\in L_0'/L\\ p(\nuni)=\nunilor}} 
	e\big((\nuni,\genUU)\big)\cdot c_{q(\lambda)}(f_\nuni)
	\Big)\sum_{h^+=0}^2 (2i)^{-h^+}\int_0^{+\infty} y^{k-h^+-3/2}
	\\
	&\quad\times
	\exp\Big(-2\pi y \lambda_{w^\perp}^2 -
	\frac{\pi}{2yu_{z^\perp}^2}
	\Big)\cdot
	\exp(-\Delta/8\pi y)\big(\polw{(\alpha,\beta),\borw}{h^+}{0}\big)\big(g_0\circ\borw(\lambda)\big)
	dy
	=0.
	\ea
	
	Since~$L$ splits off a hyperbolic plane, we may choose~$\genU$ and~$\genUU$ to be the standard generators of such a hyperbolic plane, so that~$L=\brK\oplus U$ and~$U=\ZZ\genU\oplus\ZZ\genUU$.
	It is easy to see that~$L'/L\cong L_0'/L\cong \brK'/\brK$, that the map~$p$ is an isomorphism, and that the latter is actually the standard orthogonal projection~$L'/L\to\brK'/\brK$, $\nuni+L\to\nuni_{\brK} + \brK$.
	In particular, for every~$\nunilor\in\brK'/\brK$, the only~$\nuni\in L_0'/L$ such that~$p(\nuni)=\nunilor$ is~$\nuni=\nunilor+L$.
	
%	It is easy to see that the result of Lemma~\ref{lemma;technnonzeropolinprinj} is true also for~$\lambda$.
	Since~$\brK$ is orthogonal to~$\genUU$, an analogous argument on~\eqref{eq;brlamprimvv} as in the unimodular case shows that~$c_{q(\lambda)}(f_{\nunilor + L})=0$ for every primitive~$\lambda\in\brK+\nunilor$.
	This can be extended to every (not necessarily primitive)~$\lambda$ by an easy inductive argument.
	We then deduce that
	\be\label{eq;Cyan3vv}
	c_{q(\lambda)}(f_{\nunilor + L})=0,\qquad\text{for every~$\lambda\in\brK+\nunilor$ and~$\nunilor\in\brK'/\brK$.}
	\ee
	
	To conclude the proof, it is enough to show that~\eqref{eq;Cyan3vv} implies that
	\be\label{eq;Cyan4vv}
	c_{q(\lambda)}(f_\nuni)=0,\qquad\text{for every $\lambda\in L+\nuni$ and~$\nuni\in L'/L$.}
	\ee
	In fact~\eqref{eq;Cyan4vv} implies that~$c_n(f_\nuni)=0$ for every positive $n\in\ZZ+q(\nuni)$, since~$L$ splits off a hyperbolic plane.
	
	Our new approach provides a different way with respect to~\cite{br;converse} to prove~\eqref{eq;Cyan3vv}.
	In fact, the latter is the same as~\cite[(5.3)]{br;converse}.
	If~$L$ splits off two orthogonal hyperbolic planes, i.e.~$N=1$, then one can deduce~\eqref{eq;Cyan4vv} from~\eqref{eq;Cyan3vv} exactly as in~\cite[Proof of Theorem~$5.12$, last two paragraphs]{br;borchp}.
	In the more general case where~$L$ splits off~$U(N)\oplus U$, one can deduce~\eqref{eq;Cyan4vv} exactly as in~\cite[\textcolor{\myblack}{Proof of} Theorem~$5.3$, Part~$3$]{br;converse}.\qedhere

%	The lattice~$L$ splits off two orthogonal hyperbolic planes by hypothesis, say~$L=D\oplus U\oplus U$ for some sublattice~$D$ of~$\brK$ of signature~$(b-2,0)$.
%	We proceed with the same idea of the last part of the proof of~\cite[Theorem~$5.12$]{br;borchp}.
%	Let~$\widetilde{\bigO}(L)$ be the \emph{discriminant kernel} of~$\bigO(L)$, that is, the kernel of the natural homomorphism~${\bigO(L)\to\Aut(L'/L)}$.
%	To prove~\eqref{eq;Cyan4vv}, we show that for every~$\lambda\in L+\nuni$, there exists an isometry~$\sigma\in\Gamma(L)\coloneqq \bigO^+(L\otimes\RR)\cap \widetilde{\bigO}(L)$ such that~$\sigma(\lambda)\in\brK'$.
%	This implies that
%	\bes
%	c_{q(\lambda)}(f_\nuni) = c_{q(\sigma(\lambda))}(f_{\sigma(\nuni)}),
%	\ees
%	hence we may deduce~\eqref{eq;Cyan4vv} from~\eqref{eq;Cyan3vv}.
%	
%	It is well-known that there is an isomorphism between~${U\oplus U}$ and the lattice~$\Mat_2(\ZZ)$ of integral~$2\times 2$ matrices, such that the quadratic form of~$U\oplus U$ corresponds to the determinant on~$\Mat_2(\ZZ)$.
%	The action of~$\SL_2(\ZZ)$ on~$\Mat_2(\ZZ)$ by multiplication on the right-hand and left-hand sides gives rise to a homomorphism
%	\bes
%	\SL_2(\ZZ)\times\SL_2(\ZZ)\longrightarrow \bigO^+(U\oplus U).
%	\ees
%	The existence of~$\sigma$ follows by the theorem of elementary divisors for~$\SL_2(\ZZ)$.
	\end{proof}

	%%%%%%%%%%%%%%%%%%%
%	\section{Further generalizations}\label{sec;furthgen}
%	\subfile{sections/furthergengen1}
	
	\printbibliography

\end{document}